\def \hG{\widehat{\mathbb{G}}_{\mathrm{a}}} 
\newcommand{\inj}{\hookrightarrow}
\newcommand{\n}{\mfrak{n}}
\newcommand{\uomega}{\underline{\omega}}
\newcommand{\barS}{\overline{S}}
\newcommand{\ov}[1]{\overline{#1}}
\numberwithin{equation}{section}
\def \mb{\mbox}
\newcommand{\nc}{\newcommand}
\newcommand{\Cs}{M'_{bal,U_1(p),H'}}
\newcommand{\Jac}{\mb{Jac}(\Cs)}
\newcommand{\ner}{\mb{\tiny Ner}}
\DeclareMathOperator{\Spec}{\mathrm{Spec}}
\DeclareMathOperator{\Spf}{\mathrm{Spf}}
\DeclareMathOperator{\Lie}{\mathrm{Lie}}
\newcommand{\Hom}{\mathrm{Hom}}
\newcommand{\End}{\mathrm{End}}
\newcommand{\Res}{\mb{Res}}
\newcommand{\beqar}{\begin{eqnarray*}}
\newcommand{\eeqar}{\end{eqnarray*}}
\newcommand{\longmap}{{\,\longrightarrow\,}}
\newcommand{\oldmarginpar}[1]{}
\newcommand{\obar}[1]{\overline{#1}}
\def \d{\delta}
\newcommand{\mcal}[1]{\mathcal{#1}}
\begin{document}
\nc{\Mhf}{{\bf M}_{0,H'}'}
\newcommand{\Muf}{{\bf M}'_{bal.U_1(p),H'}}
\newcommand{\Mho}{M_{0,H}}
\newcommand{\Muo}{M_{bal.U_1(p),H}}
\newcommand{\Mh}{M'_{0,H'}}
\newcommand{\Mu}{M'_{bal.U_1(p),H'}}
\newcommand{\Mucan}{{M'}_{bal.U_1(p),H'}^{{can}}}
\newcommand{\oMh}{\overline{M'}_{0,H'}}
\newcommand{\oMu}{\overline{M'}_{bal.U_1(p),H'}}
\newcommand{\pomega}{\omega^+}
\newcommand{\Eg}{{E'_1}\mid_S}
\newcommand{\Ig}{\mb{Ig}}

\newcommand{\Fab}{F_{\tiny \mb{abs}}}
\newcommand{\Frel}{F_{\tiny \mb{rel}}}
\nc{\sym}{\Psi}

\nc{\Gm}{\mathbb{G}_m}

\newcommand{\MH}{{\widehat{M}}'_{0,H'}}
\newcommand{\MU}{{\widehat{M}}'_{bal.U(p),H'}}
\newcommand{\Xfo}{{\widehat{X}}_!}
\newcommand{\Xfor}{{\widehat{X}}_{!!}}
\newcommand{\Xp}{\overline{X}_!}
\newcommand{\Xpp}{\overline{X}_{!!}}
\newcommand{\opi}{\overline{\pi}}
\newcommand{\ohat}[1]{\widehat{#1}}
\newcommand{\fsharp}{{\bold f}^\sharp_{{\tiny \Pi}}}
\newcommand{\mfrak}[1]{\mathfrak{#1}}
\newcommand{\bA}{{\bf A}}
\newcommand{\Mig}{\overline{M'}_{Ig,H'}}
\newcommand{\gp}{\mathfrak{p}}
\newcommand{\gP}{\mathfrak{P}}
\newcommand{\sA}{\mathcal{A}}

\newcommand{\bF}{\mathbb{F}}

\newcommand{\bb}[1]{\mathbb{#1}}
\newcommand{\cX}{\mathcal{X}}
\newcommand\A{\mathbb{A}}
\newcommand\C{\mathbb{C}}
\newcommand\G{\mathbb{G}}
\newcommand\N{\mathbb{N}}
\newcommand\T{\mathbb{T}}
\newcommand\sE{{\mathcal{E}}}
\newcommand\tE{{\mathbb{E}}}
\newcommand\sF{{\mathcal{F}}}
\newcommand\sG{{\mathcal{G}}}
\newcommand\GL{{\mathrm{GL}}}
\newcommand\bM{\mathbb{M}}
\newcommand\HH{{\mathrm H}}
\newcommand\mM{{\mathrm M}}
\newcommand\J{\mathfrak{J}}
\newcommand\sT{\mathcal{T}}
\newcommand\Qbar{{\bar{\Q}}}
\newcommand\sQ{{\mathcal{Q}}}
\newcommand\sP{{\mathbb{P}}}
\newcommand{\Q}{\mathbb{Q}}
\newcommand{\tH}{\mathbb{H}}
\newcommand{\Z}{\mathbb{Z}}
\newcommand{\R}{\mathbb{R}}
\newcommand{\F}{\mathbb{F}}
\newcommand\fP{\mathfrak{P}}
\newcommand\Gal{{\mathrm {Gal}}}
\newcommand{\Ou}{\mathcal{O}}
\newcommand{\legendre}[2] {\left(\frac{#1}{#2}\right)}
\newcommand\iso{{\> \simeq \>}}
\newcommand{\M}{\mathbb{M}}
\newcommand{\m}{\mathcal{m}}

\newtheorem {theorem} {\bf{Theorem}}[section]
\newtheorem {lemma}[theorem] {\bf Lemma}
\newtheorem {prop}[theorem]{\bf Proposition}
\newtheorem {proposition}[theorem] {\bf Proposition}
\newtheorem {exercise}{Exercise}[section]
\newtheorem {example}[theorem]{Example}
\newtheorem {remark}[theorem]{Remark}
\newtheorem {definition}[theorem]{\bf Definition}
\newtheorem {corollary}[theorem] {\bf Corollary}

\newtheorem{thm}{Theorem}
\newtheorem{cor}[thm]{Corollary}
\newtheorem{conj}[thm]{Conjecture}
\newtheorem{quen}[thm]{Question}
\theoremstyle{definition}
\newtheorem{claim}[thm]{Claim}
\newtheorem{que}[thm]{Question}
\newtheorem{lem}[thm]{Lemma}
\def \vp{\varphi}
\newcommand{\wh}[1]{\widehat{#1}}
\newcommand{\pip}{\Pi}
\newcommand{\Xdag}{\widehat{X}_\dagger}

\theoremstyle{definition}
\newtheorem{dfn}{Definition}

\theoremstyle{remark}


\newcommand {\stk} {\stackrel}
\newcommand{\map}{\rightarrow}
\def \bX{{\bf X}}
\newcommand {\orho}{\overline{\rho}}

\newcommand{\hGm}{\wh{\G}_m}

\newcommand{\Ner}[1]{{#1}^{\ner}}
\newcommand{\red}[1]{{\color{red} #1}}
\newcommand{\blue}[1]{{\color{blue} #1}}

\theoremstyle{remark}
\newtheorem*{fact}{Fact}
\makeatletter
\def\imod#1{\allowbreak\mkern10mu({\operator@font mod}\,\,#1)}
\makeatother

\title{Differential  modular forms over totally real fields of 
integral weights}
\author{Debargha Banerjee}
\address{Indian Institute of Science Education and Research Pune, 
Dr. Homi Bhabha Road, Pashan, Pune 411 008, India}
\email{debargha@iiserpune.ac.in}
\author{Arnab Saha}
\address{Indian Institute of Technology Gandhinagar, Palaj, 
Gandhinagar  382355, India}
\email{arnab.saha@iitgn.ac.in}
\begin{abstract}
In this article, we construct a differential modular form of non-zero order
and integral weight for compact Shimura curves
over totally real fields bigger than $\Q$.  The construction uses the theory of 
mod $p$ companion forms by Gee and the lift of Igusa curve to characteristic 
0. This is the analogue of the  
construction of Buium in \cite{MR3349440}.  
\end{abstract}
\subjclass[2010]{Primary: 13F35, Secondary: 11F32, 11F41, 14D15}
\keywords{Witt vectors, $p$-adic Modular forms, Deformation theory}
\maketitle

\section{Introduction}

The theory of $\d$-geometry, due to A. Buium and A. Joyal, has developed as an 
arithmetic analogue of 
differential algebra. In this theory, the role of a derivation is played by
a $p$-derivation $\d$. Similar to the way that the algebraic definition of a 
usual 
derivation comes from the power series ring, a $p$-derivation is defined using 
the $p$-typical Witt vectors. A $p$-derivation $\d$ on any ring $A$ satisfies
\beqar
\d(x + y) & = & \d x + \d y + \frac{x^p + y^p -(x+y)^p}{p}, \\
\d (xy) & = & x^p \d y + y^p \d x + p\d x \d y. \\
\eeqar
for all $x,y \in A$.
Such a ring $A$ with a $p$-derivation $\d$ on it is called a $\d$-ring. 
The theory of arithmetic jet spaces on algebraic groups (e.g. $\GL_n$) 
was developed in the following series of papers \cite{MR3473428,MR3477327,
MR3477328}. In \cite{buium2019perfectoid}, a canonical perfectoid space
is attached to jet spaces by using convergence properties of 
$\d$-characters.

In a recent development by Bhatt and Scholze on comparison theorems, the 
prismatic sites defined in \cite{bhatt2019prisms} are $\d$-rings satisfying 
certain 
divisorial conditions. Here they show that the various cohomology
theories such as the de Rham, crystalline and \'{e}tale can be obtained by
'base changing' the prismatic cohomology.
In \cite{drin,isocrys} the $\d$-geometry leads to remarkable
 new weakly admissible filtered isocrystals which do not come from crystalline 
cohomology. Then the Fontaine functor associates  new 
$p$-adic Galois representations to such objects.

In \cite{MR1908022, MR1748272, MR2025806, MR2081150, 
MR2882615,  MR2890522, MR2890519} 
the arithmetic jet space theory was developed over a modular
curve and its associated Hodge bundle. Sections of the arithmetic jet space 
associated to the Hodge bundle are called differential modular forms. 
For every $n$, there exist canonical morphisms $\phi$ 
from the $n$-th jet space to 
the $(n-1)$-th jet space which are lifts of the Frobenius map.
Hence one considers the bundle obtained
from the pull-back of 
the Hodge bundle along various compositional powers of such $\phi$ and taking
their tensor products.  
A section of such a bundle is called a {\it differential modular form of order 
$n$ and weight $w$}, where $w$ belongs to the weight space which is the 
ring of polynomials $\Z[\phi]$ and $n$
is the degree of $w$. We will denote the space of such differential
modular forms as $M^n(w)$. The precise definition is given in Section 
\ref{dmf}.

One of the striking features of this theory comes from the existence of 
differential modular forms which do not have any classical counterpart. 
We note down a few and their properties:
\begin{enumerate}
\item {\bf $f^1$}: \cite{MR1748272} This is a differential modular form in 
$M^1(-\phi-1)$ that admits $\d$-Fourier expansion of the form
$$
\Psi= \frac{1}{p}\log \frac{\phi(q)}{q^p} =  \sum_{n \geq 1} (-1)^{n-1}n^{-1}
p^{n-1} \left(\frac{q'}{q^p}\right)^n 
$$
where $q$ is the usual Fourier parameter and $q'$ is the formal $\d$-coordinate
associated to $q$. The interesting feature of $f^1$ is that when it is 
evaluated on the $R$-points of the modular curve, then its zero locus 
precisely consists of the elliptic curves $E$ that are canonical lifts, that is 
$E$ has a lift of Frobenius on its structure sheaf; in otherwords the 
Serre-Tate parameter of $E$ is $1$.

\item {\bf $f^\partial$}: \cite{MR1988499} This is a differential modular form 
in 
$M^1(\phi-1)$ whose $\delta$-Fourier expansion is $1$ and is a characteristic
$0$ lift of the Hasse invariant.

\item {\bf $f^\sharp$}: \cite{MR2400054} 
Given a Hecke newform $f(q)=\sum_{n\geq 1} a_n q^n$ 
of weight 2, $f^\sharp$ is obtained from  
the $\d$-character of the modular elliptic curve associated to $f$. Its
$\d$-Fourier expansion is given by
$$
\frac{1}{p}\sum_{n\geq 1} \frac{a_n}{n} (\phi^2(q)^n - a_p \phi(q)^n + pq^n).
$$ 
\end{enumerate}
The forms $f^1$ and $f^\sharp$ and their diophantine properties 
led to the result of finite intersection of Heegner points
and finite rank subgroups on a modular correspondence \cite{MR2507742}.

In \cite{MR3128464} the first author extended the theory of differential
modular forms to the setting of totally real fields setting and 
developed the objects analogous to $f^1, ~f^\partial$ and $f^\sharp$. 
In \cite{MR2890522} Buium constructed a new differential modular form coming 
from 
mod $p$ newforms. This paper is devoted to the construction of
the analogous object
associated to a mod $p$ Hilbert modular form $\Pi$ of level $\n$ prime to 
$p$ and weight $k$.

We now explain our result in greater detail. 
Let $F$ be a totally real field of degree $d > 1$ over $\Q$
with $\tau_1,\cdots, \tau_d$ the infinite places of $F$. Let $\gp_1,\cdots,
\gp_m$ be the primes of $F$ lying above $p$. Fix $\gp = \gp_1$ and let 
the residue field of $F$ at $\gp$ be of cardinality $q$ which is a 
power of $p$. Let $\Ou_\gp$ be the completion of the local ring at $\gp$,
$F_\gp$ be its fraction field and $q$ be the cardinality of the residue
field $\Ou_{\gp}/\gp$.
Let $R$ to be the completion of the maximal
unramified extension of $\Ou_{\gp}$ with $(\pi)$ the maximal ideal. 
Let $\kappa= R/(\pi)$ be the residue field which is algebraically closed 
and $K$ be the fraction field
of $R$. In our context, we will be considering the general notion of 
$\pi$-derivation $\d$ pertaining to the uniformiser $\pi$ of the maximal ideal
of $R$.

Consider $R$ along with its unique lift of Frobenius $\phi$ on it. The 
associated $\pi$-derivation $\d$ on $R$ is given by
$$
\d r = \frac{\phi(r) - r^q}{\pi}
$$
for $r \in R$. Given  a $\pi$-formal scheme $Z$ over $\Spf R$, the $n$-th jet 
space $J^nZ$ as functor of points is defined as
$$
J^nZ(B) := Z(W_n(B))
$$
for all $\pi$-adically complete $R$-algebras $B$ and $W_n(B)$ is the ring of 
$\pi$-typical
Witt vectors. Then $J^nZ$ is representable by a $\pi$-formal scheme and the 
restriction and the Frobenius maps from $W_n(B)$ to $W_{n-1}(B)$ induce
scheme-theoretic morphisms, denoted $u$ and $\phi$ respectively, from
$J^nZ$ to $J^{n-1}Z$. This makes the system of $\pi$-formal schemes $\{J^nZ
\}_{n=0}^\infty$ a canonical object in the category of prolongation sequences 
of $\pi$-formal schemes. Clearly, if $Z$ is a $\pi$-formal group scheme then 
$J^nZ$ also is a group object.

Given an ordinary
 mod $p$ Hilbert modular form $\Pi$ of level $\n$ prime to $p$ and weight $k$, 
by \cite{MR2357747} Gee associates an ordinary companion form $\Pi'$ of 
parallel weight 
$k'= p+1 -k$ and level $\n$. To such a $\Pi'$, by Hida theory one associates 
an ordinary mod 
$p$ form of weight $2$ and level $\n p$. By abuse of notation, we will still
denote this ordinary weight 2 mod $p$ form by $\Pi'$. Now by Serre lifting one 
can associate a 1-form $\omega_{\Pi'} \in H^0(\Mu,\Omega_{\Mu})$ which is
then a weight 2 Hecke cuspform and is the characteristic 0 lifting of the 
ordinary mod $p$ form $\Pi'$. Here the unitary Shimura curve $\Mu$ is a finite
and flat cover of $\Mh$ and we refer to Section \ref{quaternionas} for 
the recollection of basic definitions and properties of these curves. 
We consider the following map of schemes over $\Spec K$ 
\begin{equation}
\label{abeljacobi}
\Jac \stk{\nu}{\longrightarrow} A_{\Pi'}
\end{equation}
where $A_{\Pi'}$ is the quotient abelian scheme over $\Spec K$
associated to $\omega_{\Pi'}$. 

For any scheme $Z$ over $\Spec K$, let $\Ner{Z}$ denote its N\'{e}ron model
over $\Spec R$. Then in Section \ref{neron}, we associate the following 
morphism of group schemes over $\Spec R$
\begin{equation}
\label{abeljacobineron}
(\Ner{\Jac})^0 \map B,
\end{equation}
where $B$ is either an abelian scheme or a split torus of dimension $g$ where
$g$ is the dimension of $\Jac$ over $\Spec K$ and $(\Ner{\Jac})^0$ is the
connected component of the identity of $\Ner{\Jac}$.

Given any scheme $Y$ over $\Spec R$, let 
$\wh{Y}$ denote its $\pi$-formal completion. 
Then we consider an affine open $p$-formal subscheme $\wh{X}$ of 
$\wh{\Ner{\Mh}}$ which we assume is also contained inside the ordinary locus.
We still denote by $L$ the restriction and the $\pi$-formal completion of 
the
Hodge bundle (minus the zero section) on $\wh{X}$.  Then the fibration $L \map 
X$ is a $\hGm$-bundle and induces the fibration $J^nL \map J^nX$ which is a
$\bb{W}_n^*$-bundle where $\bb{W}_n^*$ is the $\pi$-formal group scheme of the
multiplicative units of the Witt vector scheme $\bb{W}_n$.

The differential modular forms of order $n$ are sections of $J^nL$. The 
weight space of such differential modular forms are precisely the 
multiplicative characters of the $\pi$-formal scheme $\bb{W}_n^*$ which is 
$\Z[\phi]$.
For $2 < k < p$ we define $k'$ as its conjugate if $0 < k' <q-1$ and there 
exists an integer $c$ such that $k' \equiv c(k-2) \bmod (q-1)$. Then our main 
result is:

\begin{theorem}
\label{main-thm-new}
For any mod $p$ Hilbert modular form $\pip$ of level $\mfrak{n}$ and weight 
$2 < k < p $,
there exists a differential modular form $\fsharp$ of order either $1$ or $2$
and integral weight $k'$ where $k'$ is a conjugate of $k$.
\end{theorem}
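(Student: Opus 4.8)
The plan is to construct $\fsharp$ by pulling back a $\delta$-character of the abelian scheme (or torus) $B$ attached to the companion form $\Pi'$, in close analogy with Buium's construction of $f^\sharp$ recalled in the introduction. First I would fix the mod $p$ Hilbert modular form $\pip$ of weight $2<k<p$ and level $\n$, and invoke Gee's theorem \cite{MR2357747} to obtain the ordinary companion form $\Pi'$ of parallel weight $k'=p+1-k$; via Hida theory this produces an ordinary mod $p$ form of weight $2$ and level $\n p$, still denoted $\Pi'$, and then Serre lifting gives the weight $2$ Hecke cuspform $\omega_{\Pi'}\in H^0(\Mu,\Omega_{\Mu})$ lifting it. This is precisely the data entering \eqref{abeljacobi}: the Abel--Jacobi-type morphism $\Jac\stk{\nu}{\longrightarrow}A_{\Pi'}$ over $\Spec K$. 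Passing to connected N\'eron models over $\Spec R$ as in Section \ref{neron} yields the morphism \eqref{abeljacobineron}, $(\Ner{\Jac})^0\to B$.

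Next I would take $\pi$-formal completions and restrict to the affine open ordinary piece $\wh{X}\subset\wh{\Ner{\Mh}}$ together with the Hodge bundle $L\to X$. The key point is that the jet space $J^1 B$ (or $J^2 B$, depending on whether $B$ is an abelian scheme or a split torus, which is exactly why the order is ``$1$ or $2$'') carries a canonical $\delta$-character $\psi\colon J^n B\to\wh{\G}_{\mathrm a}$ of the appropriate weight. Indeed, for a split torus one uses the ``logarithmic derivative'' $\delta$-character $\frac1\pi\log(\phi(u)/u^q)$, which is of order $1$ but whose weight is $\phi-q$ rather than a constant, so one must compose with a further Frobenius to land in an \emph{integral} weight; for an abelian scheme one uses a basis of the $\R$-module of $\delta$-characters, which in the totally real setting first appears in order $2$. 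Pulling $\psi$ back along the composite $J^n L\to J^n X\to J^n\wh{\Ner{\Mh}}\to J^n(\Ner{\Jac})^0\to J^n B$ (using that $L\to X$ sits inside the jet tower of the Jacobian via the universal abelian scheme and the Hodge bundle map) gives a section of $J^n L$, i.e. a differential modular form. The weight bookkeeping — tracking how the weight $2$ of $\omega_{\Pi'}$ interacts with the Frobenius twists coming from the companion correspondence $k'\equiv c(k-2)\bmod(q-1)$ — is what pins the weight down to the conjugate $k'$ of $k$, and one checks it is a genuine (nonzero) element of $M^n(k')$ by computing its $\delta$-Fourier expansion, which should come out as a $q$-expansion analogous to the formula for $f^\sharp$ and is visibly nonzero.

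The main obstacle I expect is the reduction to the split-torus-or-abelian-scheme dichotomy in \eqref{abeljacobineron} and the control of the $\delta$-character in the bad-reduction/torus case: one needs the N\'eron model of $\Jac$ over $R$ to have semistable (in fact either good or totally multiplicative) reduction on the relevant component, and then to know that the $\delta$-characters on a split torus over $R$ behave well enough $\pi$-adically that the pulled-back form converges and is nonzero. A secondary difficulty is verifying that the pullback along $\nu$ does not kill the $\delta$-character — i.e. that $\nu$ is nonconstant on the relevant jet spaces — which ultimately rests on the nonvanishing of $\omega_{\Pi'}$ as a section and on the companion form $\Pi'$ being genuinely ordinary so that the Hida-theoretic weight $2$ form is nonzero. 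Once these are in place, the order ($1$ in the torus case, $2$ in the abelian case) and the integral weight $k'$ follow from the explicit structure of the $\delta$-character, completing the proof of Theorem \ref{main-thm-new}.
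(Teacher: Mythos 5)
Your skeleton agrees with the paper up to a point: companion form via Gee, Hida theory and Serre lifting to get the weight~$2$ eigenform $\omega_{\pip'}$, the quotient $\Jac\stackrel{\nu}{\to}A_{\pip'}$, passage to connected N\'eron models and the torus/abelian dichotomy for $B$, and the use of a $\delta$-character of order $1$ (split torus) or $2$ (abelian scheme). But there is a genuine gap at the step that actually produces the \emph{integral weight} $k'$. Pulling the character back along $J^nX\to J^nB$ as you propose only yields a function on $J^nX$, i.e.\ an element of $M^n(0)$; no amount of ``weight bookkeeping'' or composing with an extra Frobenius turns that into a form of weight $k'\in\Z$. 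The paper's essential device, which your proposal omits entirely, is to pull back not to $J^n\wh{X}$ but to $J^n\wh{X}_!$, where $\wh{X}_!$ is the lift of the Igusa component of $\oMu$, an \'etale $\Z/(q-1)\Z$-cover of $\wh{X}$ with coordinate ring $S_n[t]/(t^{q-1}-\varphi)$ (Lemma \ref{mainlemma}, built from the $(q-1)$-st root $a^+$ of the Hasse invariant). This gives the graded decomposition $\Ou(J^n\wh{X}_!)\simeq\bigoplus_{r=0}^{q-2}M^n(-r)$ of Proposition \ref{mainprop}, and the weight is then pinned down by showing $\fsharp$ is a diamond-operator eigenform: the equivariance diagram (\ref{Psichi}) gives $\fsharp\circ\langle\alpha\rangle=\chi(\iota(\alpha))\fsharp=\theta(\alpha)^{c(k-2)}\fsharp$, whence $\fsharp\in M^n(-k')$ with $k'\equiv c(k-2)\bmod(q-1)$. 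Note also that the Abel--Jacobi map lives at level $bal.U_1(\gp)$, so the composite you write through $J^n\wh{\Ner{\Mh}}$ does not exist without first passing to this cover.

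A secondary problem: your proposed nonvanishing check ``by computing the $\delta$-Fourier expansion'' is unavailable here. The Shimura curves in this paper are compact (the quaternion algebra is ramified at all but one infinite place), so there are no cusps and no $q$-expansions; this is precisely why the paper replaces Buium's Fourier-expansion arguments by the Igusa-cover grading and the diamond-operator eigenvalue computation.
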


{\bf Strategy of Proof.} 
In Section \ref{main-result} we construct $\Xdag$ such that 
the reduction mod $p$ of $\Xdag$ is contained inside the Igusa curve
which is an \'{e}tale cover of $\ov{X}$ of degree $q-1$. We
consider the following composition of $\pi$-formal schemes
\begin{equation}
\Xdag \inj \MU \map \wh{B}.
\end{equation}
Therefore for all $n$, the associated morphism of jet spaces will be
$J^n\Xdag \map J^n \wh{B}.$
Hence composing the above morphism with any non-zero order $n$ differential 
character $\Theta_n:J^n\wh{B} \map \hG$ (which exists for $n=1$ when $\wh{B}$
is a split torus and for $n=2$ when $\wh{B}$ is a $\pi$-formal abelian
scheme over $\Spf R$ \cite{MR1358979}),
we obtain a non-zero section $\fsharp$ on $\Ou(J^n\Xdag)$.

Now $J^n\Xdag \map J^n\wh{X}$ is \'{e}tale since $\Xdag \map \wh{X}$ is and 
therefore $\Ou(J^n\Xdag)$ is a finite graded module over $\Ou(J^nX)$ where 
the gradation respects the Galois group of $\Xdag$ over $X$ (which is 
$\Z/(q-1)\Z$). Each graded piece is the space of differential modular forms
of an appropriate weight. Hence it is enough to show that $\fsharp$ belongs
to a graded piece and that follows from certain compatibility results of the
maps with the Galois group.

 \section{Acknowledgement}
The first author was partially supported by the SERB grant YSS/2015/001491 and
MTR/2017/000357.
The second author is also grateful to the Max Planck Institute for Mathematics 
in Bonn for its hospitality and financial support.
The authors wish to thank Alexandru Buium and James Borger for several 
inspiring discussions and clarifications. 
We would also like to thank Alexei Pantchichkine and 
Jack Shotton for the discussions and 
insights that took place during the preparation of this paper.

\section{Notations}
\begin{enumerate}
\item[$\bullet$] Let $F$ be a totally real field of degree $d > 1$ over $\Q$
with $\tau_1,\cdots, \tau_d$ the infinite places of $F$. Let $\gp_1,\cdots,
\gp_m$ be the primes of $F$ lying above $p$. Fix $\gp = \gp_1$ and let 
$\kappa$ be the residue field of $F$ at $\gp$ with cardinality $q$ which is a 
power of $p$. Let $\Ou_\gp$ be the completion of the local ring at $\gp$ and
$F_\gp$ be its fraction field and let $q$ be the cardinality of the residue
field $\Ou_{\gp} /\gp$.

\item[$\bullet$] Let $R$ to be the completion of the maximal
unramified extension of $\Ou_{\gp}$ with $(\pi)$ the maximal ideal 
and let $\kappa= R/(\pi)$ be the residue field which is algebraically closed
and let $K$ be the fraction field of $R$.
 
\item[$\bullet$] Let $B$ be a quaternion algebra over $F$ that splits exactly
at one infinite place, say $\tau_1$, such that 
\subitem{$\bullet$} $B$ splits at $\gp$
\subitem{$\bullet$} Fix a maximal order $\Ou_B$ of $B$ and choose an 
isomorphism $\Ou_{B,\nu} \simeq M_2(\Ou_\nu)$ for all finite places $\nu$
of $F$ where $B$ splits
\subitem{$\bullet$} Fix an isomorphism $B_{\tau_1} \simeq M_2(\bb{R})$.

\item[$\bullet$] Let $\lambda <0$ and $K=\Q(\sqrt{\lambda})$ the imaginary
quadratic extension over $\Q$ such that $p$ splits. Consider 
$E=F(\sqrt{\lambda})$
which is an extension of degree $2d$ over $\Q$. 

\item[$\bullet$] Let $D = B \otimes_F E$ and $\Ou_D$ be a maximal order of $D$.
Then we have the following decomposition 
$$\Ou_D \otimes \Z_p = (\Ou_{D_1^1} \oplus \cdots \oplus \Ou_{D_m^1}) 
\oplus (\Ou_{D^2_1} \oplus \cdots \oplus \Ou_{D^2_m}).$$
Then for any $\Ou_D \otimes \Z_p$-module  $\Lambda$ admits a decomposition as
$$\Lambda = (\Lambda_1^1 \oplus \cdots \oplus \Lambda_m^1) \oplus 
(\Lambda_1^2 \oplus \cdots \oplus \Lambda_m^2). $$
The $\Ou_{D^2}$-module $\Lambda_1^2$ decomposes as the direct sum of two
$\Ou_\gp$-modules $\Lambda_1^{2,1}$ and $\Lambda_1^{2,2}$, the kernels of the
respective idempotents $\left(\begin{array}{ll} 1 & 0 \\ 0 & 0 \end{array}
\right)$ and $\left(\begin{array}{ll} 0 & 0 \\ 1 & 0 \end{array}\right)$.

\item[$\bullet$] For any $R$-algebra $A$, let $\overline{A}= A/pA$.

\item[$\bullet$] Let $K$ be the fraction field of $R$.

\item[$\bullet$] 
If  $X$ is a scheme over $\Spec R$, let $X_K:= X \times_{\Spec R} \Spec K$ be 
the generic fiber.  

\item[$\bullet$] $\obar{X}= X \times_{\Spec R} \Spec \kappa$ be the special 
fiber over $\Spec \kappa$.

\item[$\bullet$] 
Let $\ohat{X}$ denote the $\pi$-formal completion of $X$ over $\Spf R$.

\item[$\bullet$]
For any scheme $Z$ over $\Spec K$, let $Z^{\ner}$
denote its N\'{e}ron model over $\Spec R$. 

\end{enumerate}

\section{Shimura curves}
\label{quaternionas}

We first start by recalling the basic notions of the various types of Shimura 
curves. The main references are \cite{MR860139}, \cite{MR2357747} and 
\cite{MR2027194}.

\subsection{Quaternionic Shimura curves}
Let $G= \Res_{F/\Q}(B^\times)$ be the reductive group over $\Q$. Let $K \subset
G(\bb{A}_{\Q}^f)$ be a compact subgroup where $\bb{A}^f_{\Q}$ are 
the finite adeles. Then the Shimura curve associated to $K$ is defined to be
the following:
\begin{align}
\label{Shim1}
M_K(\C) = G(\Q) \backslash (G(\A^f_\Q) \times (\C \backslash \R))/K.
\end{align}
Write $K= K_\gp K^\gp$ where $K_\gp$ be the component corresponding to 
$\gp$ and $K^\gp$ for all the rest of the finite places. Let $K^\gp= H$ be a
fixed group.
In this article, we will be interested in the following two choices of $K_\gp$: 

(1) $K_\gp = GL_2(\Ou_\gp)$ and we will denote the corresponding 
Shimura curve as $\Mho$.

(2) $K_\gp= \left\{\left(\begin{array}{ll} a & b \\ c & d
 \end{array}\right)  \in GL_2(\Ou_\gp) \mid 
\left(\begin{array}{ll} a & b \\ c & d \end{array}\right) \equiv
\left(\begin{array}{ll} 1 & * \\ 0 & 1 \end{array}\right) 
\bmod \gp \right\} $
and we will denote the Shimura curve as $\Muo$.

\subsection{Unitary Shimura Curves $M_{K'}'$}
Now we will give a brief introduction to the unitary Shimura curves. The 
following theorem of Carayol in \cite{MR860139} connects the unitary Shimura 
curves $M_{K'}$ with the quaternionic ones denoted $M_K$, once they are 
both base changed to $\Spec R$:

\begin{thm}[Carayol]
\label{Carayol}
Let $H \subset \Gamma$ be a small enough open compact subgroup and $N_H$ a 
connected
component of $M_{0,H} \times_{\Spec \Ou_\gp} \Spec R$. There exists an open 
compact subgroup 
$H' \subset {\Gamma}'$
and a connected component $N'_{H'}$ of $M'_{0,H'} \times_{\Spec \Ou_\gp} 
\Spec R$, such that $N_H$ and $N'_{H'}$ are isomorphic over $\Spec R$.
\end{thm}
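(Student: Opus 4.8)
This is Carayol's theorem \cite{MR860139}; the plan is to identify connected components of the two curves in three stages --- over $\C$, over a number field, and finally over $\Spec R$ --- exploiting that the unitary curve, unlike the quaternionic one, is of PEL type. In Stage 1 I would attach to the pair $(D,*)$, where $D=B\otimes_F E$ and $*$ combines the main involution of $B$ with complex conjugation of $E/F$, the unitary similitude group $G'=GU(D,*)$ over $\Q$ together with a Shimura datum $(G',X')$, so that $M'_{0,H'}(\C)=G'(\Q)\backslash(G'(\A^f_\Q)\times X')/H'$. Since $B$ splits at $\tau_1$ and ramifies at $\tau_2,\dots,\tau_d$ while $E/F$ is totally imaginary with $\gp$ split, $G'(\R)$ is, modulo its centre, $GU(1,1)$ times a product of compact unitary groups, so $X'$ is a union of copies of $\mathfrak H^{\pm}$ and every connected component of $M'_{0,H'}(\C)$ has the form $\Gamma'\backslash\mathfrak H$ for a congruence subgroup $\Gamma'$. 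Because $G'$ and $G=\Res_{F/\Q}(B^\times)$ have the same adjoint group, their families of arithmetic lattices coincide, so for a given small $H\subset\Gamma$ I would choose $H'\subset\Gamma'$ making a connected component $N_H$ of $\Mho\times_{\Spec\Ou_\gp}\Spec R$ isomorphic, as a Riemann surface, to a connected component $N'_{H'}$ of $\Mh\times_{\Spec\Ou_\gp}\Spec R$.

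\textbf{Stage 2 (descent to a number field).} Both curves carry canonical models in the sense of Shimura--Deligne: $\Mho$ over the reflex field $\tau_1(F)$, and $\Mh$ over the reflex field $E'$ of $(G',X')$, the latter determined by the induced CM type on $E$ and having completion $F_\gp$ at the prime above $\gp$ --- again because $\gp$ splits in $E$. Using the theory of connected Shimura varieties I would describe each component together with its Galois action through the reciprocity law on special points: on the quaternionic side the CM points of $\Mho$, on the unitary side the abelian varieties with extra complex multiplication, the two reciprocity laws corresponding under the identification of adjoint Shimura data. This makes the Stage 1 isomorphism Galois-equivariant, so it descends to an isomorphism $N_H\simeq N'_{H'}$ over a number field, and hence over $\Spec\Ou_\gp$.

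\textbf{Stage 3 (extension over $\Spec R$).} The curve $\Mh$ is of PEL type: up to passage to connected components it represents the moduli of abelian schemes of dimension $[E:\Q]$ equipped with an $\Ou_D$-action, a polarization and a level structure, the associated $p$-divisible group splitting along the idempotents of $\Ou_D\otimes\Z_p$ recorded in the Notations, with the summand attached to $\Lambda_1^{2,1}$ a one-dimensional, height-two ``fake elliptic curve.'' Since $B$, hence $D$, splits at $\gp$ and the level $K_\gp=\GL_2(\Ou_\gp)$ is hyperspecial, this moduli problem is represented by a smooth proper $\Ou_\gp$-scheme, which gives $\Mh$ --- and therefore $N'_{H'}$ --- a smooth proper model over $\Spec R$; I would take this smoothness and properness as the defining property of the integral model of $\Mho$. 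Two smooth proper curves over the strictly henselian DVR $R$ with isomorphic generic fibres are isomorphic (the smooth proper model of a positive-genus curve is unique when it exists, and the isomorphism spreads out over $R$), which yields $N_H\simeq N'_{H'}$ over $\Spec R$.

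\textbf{Main obstacle.} The real work is Stage 2 and its bookkeeping: pinning down the reflex field $E'$ precisely, checking that the $\Ou_\gp$-algebra structures on the two sides agree through the splitting of $\gp$ in $E$, and verifying that Shimura's reciprocity law on the quaternionic special points transports to Shimura--Taniyama reciprocity on the CM abelian varieties. Only with this in hand is the complex isomorphism of Stage 1 known to be defined over the correct number field, which is the prerequisite for the integral comparison carried out in Stage 3.
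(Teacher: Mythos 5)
The paper offers no proof of this statement: it is quoted as a theorem of Carayol with a bare reference to \cite{MR860139}, so there is no internal argument to compare yours against. Judged against Carayol's actual proof, your three-stage plan is a faithful reconstruction of the strategy. Two remarks on the details. First, in Stage 1 the operative fact is not merely that $G=\Res_{F/\Q}(B^\times)$ and $G'$ have a common adjoint group, but that they share the derived group $\Res_{F/\Q}(B^1)$, so that the associated \emph{connected} Shimura data (adjoint group, derived group, and the component $X^+$) literally coincide; that is what makes the families of arithmetic quotients $\Gamma\backslash\mathfrak{H}$ on the two sides match up. Second, in Stage 3 Carayol does not compare two independently given integral models: the $\Ou_\gp$-model of the quaternionic curve $\Mho$ is \emph{defined} by transporting the PEL model of $\Mh$ through this very comparison, so your closing appeal to uniqueness of smooth proper models over the strictly henselian $R$ is only needed if one insists on a separately constructed quaternionic model --- as you half-acknowledge when you say you would take smoothness and properness as the defining property. (If one does go that route, one also needs the generic fibres to have genus at least one, which holds for $H$ small enough as assumed.)

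As a proof the proposal is incomplete at exactly the point you identify as the main obstacle: Stage 2 --- the computation of the reflex field of $(G',X')$, the verification that its completion above $\gp$ is $F_\gp$ because $\gp$ splits in $E$, and the matching of Shimura's reciprocity law on quaternionic CM points with Shimura--Taniyama reciprocity on the CM abelian varieties --- is described but not carried out, and it is the part that makes the complex isomorphism descend to the correct field. So the plan is sound and consistent with Carayol's argument, but it remains a plan.
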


Fix $\mu$ to be a square root of $\lambda$ and consider the map $E \map 
F_\gp \oplus F_\gp$ given by $(x+y\sqrt \lambda) \mapsto (x+y\mu, x-y\mu)$, 
which extend to an isomorphism 
\begin{align}
\label{ringiso}
E \otimes \Q_p \simeq F_p \oplus F_p \simeq (F_{\gp_1} \oplus \cdots \oplus
F_{\gp_m}) \oplus (F_{\gp_1} \oplus \cdots \oplus F_{\gp_m}).
\end{align}
The above gives an inclusion of $E$ in $F_\gp$ via the projection
$$E \inj E\otimes \Q_p \simeq F_p \oplus F_p \stk{\tiny \mb{pr}_1}{\map} F_p 
\stk{\tiny \mb{pr}_1}{\map} F_\gp.$$
Let $z \mapsto \overline{z}$ denote the conjugation of $E$ with respect to $F$.
Let $D=B \otimes_F E$ and let $l \mapsto \overline{l}$ be the canonical 
involution of $B$ with the conjugation of $E$ over $F$. Let $V$ be the 
underlying $\Q$-vector space of $D$. Choose $\Delta \in D$ such that 
$\ov{\Delta} = \Delta$ and define an involution on $D$ by $l^*:= 
\Delta^{-1} \ov{l} \Delta$. Choose $\alpha \in E$ such that $\ov{\alpha} = 
-\alpha$. Define a symplectic form $\sym$ on $V$ as 
$$\sym(u,w) = \mb{tr}_{E/\Q}(\alpha \mb{tr}_{D/E}(v\Delta w^*)).$$ 
The symplectic form $\sym$ is an alternating non-degenerate form on $V$ and
satisfies 
$$\sym(lv,w) = \sym(v,l^*w).$$
Let $G'$ be the reductive algebraic group over $\Q$ such that for any 
$\Q$-algebra $B$, $G'(B)$ is the group of $D$-linear symplectic simplitudes
of $(V\otimes_\Q B, \sym\otimes_\Q B)$.

Let $\Ou_B$ be a fixed maximal order of $B$ and fix an isomorphism $\Ou_B
\otimes_{\Ou_F} \Ou_\gp \simeq M_2(\Ou_\gp)$. Let $\Ou_D$ be a maximal order
of $D$. Let $V_\Z$ denote the corresponding lattice in $V$. The 
decomposition of $E\otimes \Q_p$ induces the following decomposition of 
$D \otimes \Q_p$ and $\Ou_D\otimes \Z_p$
$$\xymatrix{
\Ou_D \otimes \Z_p \ar@{^{(}->}[d] \ar@{=}[r]&  (\Ou_{D^1_1} \oplus \cdots \oplus \Ou_{D^1_m}) 
\oplus (\Ou_{D_1^2} \oplus \cdots \oplus \Ou_{D^2_m}) \ar@{^{(}->}[d] \\
D \otimes \Q_p \ar@{=}[r]& 
(D_1^1 \oplus \cdots \oplus D^1_m) \oplus (D_1^2 \oplus
\cdots \oplus D_m^2) 
}$$
where each $D_j^k$ is an $F_{\gp_j}$-algebra isomorphic to $B \otimes_F
F_{\gp_j}$. One can choose $(\Ou_D,\alpha,\Delta)$ in such a way that

i) $\Ou_D$ is stable under involution $l \mapsto l^*$

ii) each $\Ou_{D_j^k}$ is a maximal order in $D^k_j$ and $\Ou_{D^2_1} \inj
D_1^2= M_2(F_\gp)$ identifies with $M_2(\Ou_\gp)$

iii) $\sym$ takes integer values on $V_\Z$

iv) $\sym$ induces a perfect pairing $\sym_p$ on $V_{\Z_p} = V_\Z \otimes 
\Z_p$

Then each $\Ou_D \otimes \Z_p$-module $\Lambda$ admits a decomposition
\begin{align}
\label{decomp}
\Lambda = (\Lambda_1^1 \oplus \cdots \oplus \Lambda_m^1) \oplus (\Lambda_1^2
\oplus \cdots \oplus \Lambda_m^2)
\end{align}
such that $\Lambda_j^k$ is an $\Ou_{D_j^k}$-module. Also further, the 
$M_2(\Ou_\gp)$-module  $\Lambda_1^2 = \Lambda_1^{2,1} \oplus \Lambda_1^{2,2}$
where the $\Ou_\gp$-modules $\Lambda_1^{2,1}$ and $\Lambda_1^{2,2}$ are 
projections with respect to idempotents $e$ and $1-e$ respectively where 
$e= \left(\begin{array}{ll} 1 & 0 \\ 0 & 0 \end{array} \right)$.
Then the finite adelic points of $G'$ can be described as 
\begin{align}
\label{Gprime}
G'(\A^f)= \Q_p^* \times GL_2(F_\gp) \times \Gamma' 
\end{align}
where 
\begin{align}
\label{Gammaprime}
\Gamma'= G'(\A^{f,p}) \times (B \otimes_F F_{\gp_2})^* \times \cdots \times
(B \otimes_F F_{\gp_m})^*.
\end{align}
Let $K' \subset G'(\A^f)$ be an open compact subgroup and $X'$ be a 
conjugacy class in $G'(\R)$ as in \cite{MR2027194}, page 362. Then the unitary 
Shimura curve over $\C$ is 
\begin{align}
\label{UShimura}
M_{K'}'(\C) = G'(\Q) \backslash G'(\A^f) \times X' / K' 
\end{align}
which is a compact Riemann surface. Let $\ohat{T}(A) = \Pi_p(T_p(A))$ denote 
$\varprojlim_n A[n]$ as a sheaf over $\Spec B$ in the \'{e}tale 
topology. We will consider subgroups of the form 
$$K'= \Z_p^* \times GL_2(\Ou_\gp)\times H' \inj \Q_p^* \times GL_2(F_\gp)
\times \Gamma'.$$
Let $\ohat{T}^p(A) = \Pi_{l \ne p} T_l(A)$. 
As in (\ref{decomp}) the $\Ou_D \times \Z_p$-module $T_p(A)$ decomposes as
$$T_p(A) = \left( (T_p(A))^1_1 \oplus \cdots \oplus (T_p(A))^1_m\right)
\oplus \left( (T_p(A))^2_1 \oplus \cdots \oplus (T_p(A))^2_m \right).
$$
Let us define the following:
\begin{enumerate} 
\item[$\bullet$] $T_p^\gp := (T_p(A))^2_2 \oplus \cdots \oplus (T_p(A))^2_m.$

\item[$\bullet$] $\ohat{W}^p:= V_\Z \otimes \ohat{Z}^p$.

\item[$\bullet$] $W_p^\gp = (V_{\Z_p})^2_2 \oplus \cdots \oplus 
(V_{\Z_p})^2_m$.
\end{enumerate}

Now consider the following functor
$$\Mhf: \{\Ou_\gp\mb{-algebras}\} \map \mb{Sets}$$
where for any $\Ou_\gp$-algebra $B$,  $\Mhf(B)$ is the set of isomorphism 
classes of tuples $(A,i, \theta, \ov{\alpha}^\gp)$ such that 

\begin{enumerate}
\item $A$ is an abelian scheme over $B$ of relative dimension $4d$, equipped 
with an action $\Ou_D$ given by $i: \Ou_D \map \End_B(A)$ such that

\begin{enumerate}
\item the projective $B$-module $\Lie_2^{2,1}(A)$ has rank one and $\Ou_\gp$ 
acts on it via $\Ou_\gp \map B$.

\item for $j \geq 2$, $\Lie_j^2=0$.

\end{enumerate}

\item $\theta$ is a polarisation of $A$ of degree prime to $p$ such that the 
corresponding Rosati involution sends $i(l)$ to $i(l^*)$.

\item $\ov{\alpha}^\gp = \alpha_p^\gp \oplus \alpha^p: T_p^\gp(A) \oplus
\ohat{T}(A) \simeq W_p^\gp \oplus \ohat{W}^p$ modulo $H'$, is a class of 
isomorphism with $\alpha_p^\gp$ linear and $\alpha^p$ symplectic.
\end{enumerate}

The above moduli problem is fine and is represented by a scheme $\Mh$ over
$\Spec \Ou_\gp$. There is an universal object 
$(A'_{0,H'},i,  \theta , \ov{\alpha}_{\gp})$ over $\Mh$
such that any test object over an $\Ou_\gp$-algebra $B$ is obtained by pulling 
back the universal quadruple via
the corresponding morphism $\Spec B \rightarrow \Mh$. Let 
$\sigma: A'_{0,H'} \rightarrow \Mh$
denote the morphism of the universal family to $\Mh$. The $\Ou_{\Mh}$-module 
$\sigma_*(\Omega^1_{A'_{0,H'} / \Mh})$ is
an $O_D \otimes \Z_p$ module and define
\begin{align}
\label{uomega}
\underline{\omega}=
\left(\sigma_*(\Omega^1_{A'_{0,H'} / \Mh})\right)_1^{2,1}
\end{align}
which is a an invertible sheaf 
on $\Mh$. 

Recall $\Eg= (A_\gp)^{(2,1)}_1$ from \cite{MR2357747}, page 6.
We define a $bal.U_1(\gp)$-structure on an $\Mh$-scheme $S$ as a short 
exact sequence of f.p.p.f $\Ou_\gp$-group schemes on $S$ 
$$0 \map \mcal{K} \map \Eg \map \mcal{K}' \map 0$$
such that $\mcal{K}, \mcal{K}'$ are both locally free of rank $q$ together with
sections $P \in \mcal{K}(S)$ and $P' \in \mcal{K}'(S)$ such that they generate
the respective group schemes. Now define the functor
\begin{align}
\Muf: & \{{\bf Schemes}/\Mh\} \longrightarrow {\bf Sets} \\
& S \mapsto \{bal.U_1(\gp)\mb{-structures on } S \}. \nonumber
\end{align}
Then by lemma 2.7 in \cite{MR2357747} the functor $\Muf$ is representable by a 
scheme $\Mu$ over $\Mh$.
We will denote the natural projection map as $\epsilon:\Mu 
\rightarrow M'_{0,H'}$.
The reduction modulo $p$ of $\Mu$, denoted by 
$\overline{M'}_{bal \cdot U_1(p),H'}$, has two irreducible components which 
intersect
each other at the supersingluar points. One of the components is the Igusa
curve, denoted by $\Mig$. By abuse of notation, we will still denote
the induced map on the closed fibers as $\epsilon: \Mig \rightarrow
\overline{M'}_{0,H'}$. We recall lemma $2.8$ in \cite{MR2357747}.

\begin{lemma}
The scheme $M'_{bal, U_1(p), H'}$ is regular of dimension two and we have a 
finite and flat map $\epsilon:M'_{bal, U_1(p), H'} \rightarrow M'_{0,H'}$.
\end{lemma}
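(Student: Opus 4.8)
\emph{Strategy of proof.} This is essentially Lemma~2.8 of \cite{MR2357747}; we sketch the argument. The crucial point is that over any $\Mh$-scheme $S$ the object $\Eg = (A_\gp)^{(2,1)}_1$ is a one-dimensional $\gp$-divisible group of height two --- the exact analogue of the $\gp$-divisible group of an elliptic curve --- so a $bal.U_1(\gp)$-structure on $S$ is a balanced $\Gamma_1$-structure of level $q$ on $\Eg$, and the completed local rings of $\Mu$ can be read off from the deformation theory of one-dimensional formal groups just as in the Katz--Mazur analysis of the curves $X_1(p)$ and $X_{\mathrm{bal}}(p)$.

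\emph{Reduction to regularity.} That $\epsilon\colon\Mu\to\Mh$ is a morphism of schemes is the relative representability already quoted from \cite{MR2357747}. It is quasi-finite, because over any geometric point of $\Mh$ there are finitely many exact sequences $0\to\mcal{K}\to\Eg\to\mcal{K}'\to 0$ with $\mcal{K},\mcal{K}'$ of rank $q$, and each carries finitely many Drinfeld generators $P,P'$. It is proper: a $bal.U_1(\gp)$-structure is a section, cut out by a closed condition, of a scheme finite over $\Mh$ (namely $\Eg[\gp]$ together with its finitely many rank-$q$ subgroup schemes), so $\epsilon$ factors as a closed immersion followed by a finite morphism; alternatively one checks the valuative criterion using that $\Eg$ and its subgroup schemes extend uniquely over a discrete valuation ring. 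A quasi-finite proper morphism is finite. Moreover $\epsilon$ is surjective, since $\Eg$ admits a $bal.U_1(\gp)$-structure over every algebraically closed field, and $\Mh$ is regular of dimension two, being smooth of relative dimension one over $\Spec\Ou_\gp$ by the work of Carayol \cite{MR860139}; hence $\dim\Mu=2$. If we know that $\Mu$ is regular, then $\epsilon$ is a finite morphism between regular (hence Cohen--Macaulay) schemes of the same dimension all of whose fibres have dimension zero, so it is flat by miracle flatness. Thus the lemma is reduced to the regularity of $\Mu$.

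\emph{Regularity.} Since the regular locus is open and, on a scheme of finite type over $\Ou_\gp$ whose generic fibre is regular, its complement --- if nonempty --- contains a point closed in the whole scheme, it suffices to prove that $\Mu$ is regular at its closed points, all of which lie on the special fibre $\oMu$. (The generic fibre $\Mu\times_{\Ou_\gp}K$ is smooth over $K$: it is a level covering, \'etale over the generic fibre of $\Mh$, of a smooth curve.) For a closed point $z$ we compute $\widehat{\Ou}_{\Mu,z}$ as a deformation ring, after the harmless faithfully flat regular base change $\Spec R\to\Spec\Ou_\gp$. If $z$ lies over the ordinary locus, the deformation problem for $(\Eg,\mcal{K},P,P')$ along the Serre--Tate parameter of $\Eg$ is the standard one and yields a regular local ring of dimension two. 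If $z$ is supersingular, then $\widehat{\Ou}_{\Mh,\epsilon(z)}\cong R[[t]]$, over which the formal group of $\Eg$ is the universal deformation of the height-two formal group over $\kappa$; by the deformation theory of formal groups the universal cyclic $\gp$-isogeny over $R[[t]]$ is carried by a ring of the form $R[[x,y]]/(f)$ with $f$ congruent to $xy$ up to a unit and to $\pi$ up to the appropriate power, which is regular of dimension two, and adjoining Drinfeld generators $P$ of the kernel $\mcal{K}$ and $P'$ of the cokernel $\mcal{K}'$ amounts to adjoining roots of suitable elements of this ring, which by a direct check again yields a regular ring. (This computation also recovers the stated shape of $\oMu$: two smooth curves, one of them $\Mig$, meeting transversally at the supersingular points.) Hence $\Mu$ is regular of dimension two, and by the previous paragraph $\epsilon$ is finite and flat.

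\emph{Main obstacle.} The entire weight of the proof is the supersingular local computation: one must carry out the Lubin--Tate/Katz--Mazur analysis and check that the equations cutting out a $bal.U_1(\gp)$-structure on the universal deformation form a regular system of parameters in a power series ring over $R$, exactly as Katz--Mazur do for the $\Gamma_0(p)$ and balanced $\Gamma_1(p)$ moduli problems. The extra care needed in our setting is the bookkeeping of the two generators $P,P'$ of $\mcal{K},\mcal{K}'$ simultaneously and the replacement of $p$ by the residue cardinality $q$. Everything else --- quasi-finiteness, properness (hence finiteness), and flatness via miracle flatness --- is then formal.
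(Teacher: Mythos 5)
The paper offers no proof of this lemma at all: it is recalled verbatim as Lemma~2.8 of \cite{MR2357747}, so there is nothing internal to compare your argument against. Your sketch is a faithful outline of how that reference (following Katz--Mazur and Carayol) actually establishes the statement: relative representability and quasi-finiteness of the $bal.U_1(\gp)$-moduli problem, properness hence finiteness, regularity of $\Mu$ via completed local rings computed as deformation rings, and then flatness by miracle flatness using that $\Mh$ is regular of dimension two. The logical skeleton (in particular the reduction of flatness to regularity, and the reduction of regularity to closed points of the special fibre) is sound.

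The one place where your write-up falls short of a proof is exactly where you say it does: the supersingular local computation. The assertion that the universal cyclic $\gp$-isogeny lives on $R[[x,y]]/(f)$ with $f$ of the stated shape, and that adjoining Drinfeld generators $P$ of $\mcal{K}$ and $P'$ of $\mcal{K}'$ ``by a direct check again yields a regular ring,'' is precisely the content of the hardest theorem in Katz--Mazur (regularity of the balanced $\Gamma_1$ problem at supersingular points), not a direct check; it requires the full machinery of Drinfeld level structures on the Lubin--Tate deformation of the height-two formal group, with $p$ replaced by $q=\#(\Ou_\gp/\gp)$. As it stands your argument is a correct strategy with the central computation cited rather than carried out --- which, to be fair, is also what the paper under review does by deferring entirely to \cite{MR2357747}. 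If you want a self-contained proof you must either import the relevant Katz--Mazur theorem in its $\Ou_\gp$-module (Drinfeld module style) form and verify its hypotheses for $\Eg$, or perform the power-series computation explicitly; everything else in your outline is formal.
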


\subsection{The section $a^{+}$} 

We will now recall some basic facts from \cite{MR2357747}. 
Let $S$ be an $\oMh$-scheme. For any scheme $Z$ over $\Spec k$ consider 
the diagram
$$\xymatrix{
Z \ar@{.>}[rd]|--{\Frel} \ar@/^/[rrd]^\Fab \ar@/_/[rdd] & &\\
& Z^{(q)} \ar[d] \ar[r] & Z \ar[d] \\
& S \ar[r]_{\Fab} & S 
}$$
where $\Fab$ is the absolute Frobenius over $\Spec k$, $Z^{(q)} = 
Z \times_{S, \Fab} S$  and $\Frel$ is the induced 
relative Frobenius as defined in the diagram above.

Recall the definition of $\Eg$ which is a subgroup scheme of 
$\mfrak{p}$-torsion points of the abelian scheme $A$ over $S$, \cite{MR2357747}
page 6.
Then for any $S$, consider the morphism of group schemes
$\Frel: \Eg \map \Eg^{(q)}$.
We define the Verschiebung $V: \Eg^{(q)} \map \Eg$ which is obtained by 
applying the Cartier duality to the morphism $\Frel$ above.
Now consider the following functor:
$$\Ig:  \{\oMh\mb{-schemes} \} \map \underline{\mb{Sets}}
$$
given by $\Ig(S)= \{P \in \Eg^{(q)}~ \mid~ $P$ \mb{ generates the kernel of } 
V\}$.
We recall the following from \cite{MR2357747}, lemma 2.16:

\begin{lemma}
\label{Igusalem}
The functor $\Ig$ is representable by a regular $1$-dimensional scheme 
$\Mig$ over $\Spec k$. It also admits a natural morphism 
$\epsilon: \Mig \map \oMh$ 
which is finite flat of degree $(q-1)$. 

Moreover $\epsilon$ is \'{e}tale over the ordinary locus and is totally 
ramified over the supersingular locus.
\end{lemma}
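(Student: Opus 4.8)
The plan is to realise $\Mig$ as the scheme of Drinfeld-style generators of the finite locally free group scheme $\mcal{K}_0 := \ker(V\colon \Eg^{(q)} \map \Eg)$, viewed as a one-dimensional $\Ou_\gp/\gp$-module scheme, and then to read its geometry off the ordinary and supersingular strata of $\oMh$. First I would recall from \cite{MR2357747} (ultimately from Carayol) that $\Eg$, and hence its Frobenius twist $\Eg^{(q)}$, is a finite locally free $\oMh$-group scheme which is killed by $\gp$, so is an $\Ou_\gp/\gp\cong\F_q$-module scheme, of order $q^2$ and fibrewise an $\F_q$-vector space of dimension $2$ --- this is exactly what makes $bal.U_1(\gp)$-structures the analogue of classical $\Gamma_1(\gp)$-structures. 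Since $V\circ\Frel$ is multiplication by a uniformiser of $\gp$, it annihilates the $\gp$-torsion, so $\mathrm{im}(\Frel)\subseteq\mcal{K}_0$; comparing orders (which can be done over the ordinary locus, see below) shows that $\mcal{K}_0$ is finite locally free over $\oMh$ of order $q$ and is a one-dimensional $\F_q$-module scheme. The functor $\Ig$ then assigns to an $\oMh$-scheme $S$ the set of sections $P$ of $\mcal{K}_0$ that are \emph{generators}, i.e.\ such that $\sum_{a\in\F_q}[aP]$ equals $\mcal{K}_0$ as relative effective Cartier divisors in $\Eg^{(q)}$. By the representability of generator functors for finite locally free module group schemes (Katz--Mazur, Ch.~1), this is a closed subscheme of $\mcal{K}_0$; hence $\Ig$ is representable by a scheme $\Mig$ finite over $\oMh$, with the natural finite morphism $\epsilon\colon\Mig\map\oMh$.

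Over the ordinary locus $\Eg$ becomes, after an \'etale surjective base change, isomorphic to $\mu_\gp\oplus\underline{\F_q}$ with $\mu_\gp$ of multiplicative type and $\underline{\F_q}$ \'etale (each of order $q$). The relative $q$-Frobenius $\Frel$ kills $\mu_\gp$ and is an isomorphism on the \'etale part, while, dually, $V$ kills the \'etale part and is an isomorphism on the multiplicative-type part; hence $\mcal{K}_0$ is a form of $\underline{\F_q}$ over the ordinary locus, its generators constitute an $\F_q^\times$-torsor, and $\epsilon$ is finite \'etale of degree $q-1$ there. In particular $\Mig$ is regular over the ordinary locus (because $\oMh$ is) and $\epsilon$ is unramified there.

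The heart of the matter, and the step I expect to be the main obstacle, is the local analysis at a supersingular point $x$. There $\Eg\otimes k(x)$ is local with local Cartier dual, and by Carayol's theory the complete local ring $\widehat{\Ou}_{\oMh,x}$ is isomorphic to $k[[s]]$, identified with the deformation space of the associated one-dimensional formal $\Ou_\gp$-module of $\Ou_\gp$-height $2$ --- a Lubin--Tate situation. Over $k[[s]]$ one has an explicit universal formal $\Ou_\gp$-module; I would write down $[\varpi](X)$, extract the finite flat subgroup $\mcal{K}_0=\ker V$ of order $q$, and compute the closed subscheme of $\mcal{K}_0$ cutting out its generators. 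The expected outcome is that these generators are the roots of an Eisenstein-type polynomial $t^{q-1}-(\text{unit})\cdot s$ over $k[[s]]$; this simultaneously gives that there is a unique point $x'$ of $\Mig$ above $x$, that $\widehat{\Ou}_{\Mig,x'}\cong k[[t]]$ is regular, and that $\epsilon$ is totally ramified of degree $q-1$ at $x'$. (Alternatively, $\widehat{\Ou}_{\Mig,x'}$ can be identified with a piece of the Lubin--Tate tower with $\Gamma_1(\gp)$-level structure, whose regularity is a theorem of Drinfeld.)

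Combining the two strata: $\Mig$ is regular of pure dimension $1$ over $\Spec k$; $\epsilon$ is finite, hence flat by miracle flatness; its degree is constant and equals $q-1$, the value over the dense ordinary locus; and $\epsilon$ is \'etale over the ordinary locus and totally ramified over the supersingular locus, as asserted. The one genuinely delicate point, as indicated, is the explicit Lubin--Tate computation at supersingular points establishing regularity and the exact ramification index $q-1$; everything else is formal once the group-scheme-theoretic set-up of \cite{MR2357747} is in place.
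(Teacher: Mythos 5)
The paper does not prove this lemma at all: it is quoted verbatim from Lemma~2.16 of \cite{MR2357747}, so there is no in-paper argument to compare against. Measured against the actual proof in that reference (which follows the Katz--Mazur theory of Igusa curves, transported to Carayol's unitary Shimura curves), your strategy is essentially the standard one and I see no gap in it: representability of the generator functor for the rank-$q$ module scheme $\ker(V)\subset \Eg^{(q)}$ via Katz--Mazur; the \'{e}tale $\F_q^\times$-torsor description over the ordinary locus (where $\ker V$ is the \'{e}tale part of $\Eg^{(q)}$, since $V$ is an isomorphism on the multiplicative part and zero on the \'{e}tale part); and the Lubin--Tate computation at a supersingular point giving the local equation $t^{q-1}=(\mathrm{unit})\cdot s$, hence regularity, a unique point above $x$, and total ramification of index $q-1$, with flatness then following from miracle flatness. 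The one step you flag as delicate --- the supersingular local model --- is corroborated internally by the paper's Lemma~\ref{root}: the section $a^+$ is a $(q-1)$-st root of the Hasse invariant, which has simple zeros at supersingular points, which is exactly the statement that the local equation of $\Mig$ over $\oMh$ there is $t^{q-1}=(\mathrm{unit})\cdot s$. Two minor points to tighten if you write this up fully: (i) the order count showing $\ker V$ is finite locally free of rank $q$ should not be deduced only over the ordinary locus and then extended --- rather, use that $V$ is an isogeny of the $\gp$-divisible group $(A_\gp)^{(2,1)}_1$, so its kernel on the $\gp$-torsion is finite flat of the correct order everywhere; (ii) you should say a word about why the two strata glue to a scheme regular at every point, but since the supersingular points are a finite closed set and you compute the complete local ring there directly, this is immediate.
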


We define the sheaf $\pomega := \epsilon^* \uomega$ on $\Mig$
Recall that under Cartier duality, $\Eg$ is dual to itself. Hence given a 
$P \in \ker V$ gives us a morphism $g_P: \Eg \map \Gm$.
Hence the invariant differential 
$dx/x$ on $\Gm$ pulls back to the invariant differential $g_P^*(dx/x)$ on
$\Eg$ and upon restriction, we get an invariant differential on
$\ker(\Frel|_A)$. Then there exists a unique invariant $1$-form on the abelian
scheme $A$ over $S$, that is an element in $H^0(A,\Omega^1_{A/S})$ whose
restriction to $\ker(\Frel|_A)$ is $\phi_P^*(dx/x)$. Hence taking 
$S = \Mig$, we define 
the section $a^+ \in H^0(S,\pomega)$ as described above. 

From \cite{MR3128464} or \cite{MR2027194}, 
recall the definition of the Hasse invariant $H$, which is a mod $p$ modular 
form of weight $(q-1)$. Then following a 
similar argument as in \cite{MR1074305}, proposition $5.2~ (2)$ we obtain

\begin{lemma}
\label{root}
The
section $a^+ \in \omega^+$ is a $(q-1)$-th root of Hasse invariant. 
In other words, 
\[
(a^+)^{q-1}=H,
\]
where $H$ is the Hasse invariant.
\end{lemma}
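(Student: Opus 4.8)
The plan is to exhibit both sides as sections of the line bundle $(\pomega)^{q-1} = \epsilon^*(\uomega^{q-1})$ on $\Mig$ and to compare them fibrewise over the ordinary locus, where everything is explicit in terms of the Igusa level structure. First I would recall that the Hasse invariant $H \in H^0(\oMh, \uomega^{q-1})$ is, by construction, the mod $p$ modular form whose value on a point $s$ classifying $(A,i,\theta,\ov\alpha^\gp)$ measures the Verschiebung on $\Lie(A)_1^{2,1}$; concretely, on the ordinary locus $H(s)$ is the unit obtained from the composite $\Frel^* \circ V^*$ acting on the one-dimensional space $\uomega_1^{2,1}\otimes\kappa$. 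On the other hand $a^+$ was defined pointwise over $S = \Mig$ using a generator $P \in \ker V \subset \Eg^{(q)}$ and the resulting isomorphism $g_P : \Eg \to \Gm$ over the ordinary locus: the section $a^+$ is the unique invariant $1$-form on $A$ restricting to $g_P^*(dx/x)$ on $\ker(\Frel|_A)$. So the content of the lemma is purely local on the ordinary locus, and then extends by the regularity/density statement.

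The key computation is the following. Over a point of $\Mig$ above the ordinary locus, $\Eg \cong \mu_q \times (\Z/q)$ (the connected-étale sequence), $\ker(\Frel|_A)$ is the connected part $\mu_q$ of $\Eg$ with its invariant differential $dx/x$, and $g_P$ identifies this $\mu_q$ with $\mu_q \subset \Gm$ compatibly. Raising $a^+$ to the $(q-1)$-th power, I would compute $(a^+)^{q-1}$ as a section of $\uomega^{q-1}$ whose restriction to $\ker(\Frel|_A)$ is $(dx/x)^{\otimes(q-1)}$; but on $\mu_q$ the module of invariant differentials is free of rank one over $\kappa$ and the Hasse-invariant description of $H$ says precisely that $H$ is the image of a chosen basis of $\uomega^{q-1}$ under the same identification coming from $V$. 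Hence $(a^+)^{q-1}$ and $H$ differ by an element of $H^0(\Mig,\mathcal O^*)$ that restricts to $1$ on the ordinary locus, and since $\Mig$ is a regular connected $1$-dimensional scheme (Lemma \ref{Igusalem}) with the ordinary locus dense, that unit is identically $1$. This is exactly the argument of \cite{MR1074305}, Proposition 5.2(2), transported to the quaternionic/unitary setting via Carayol's isomorphism (Theorem \ref{Carayol}) and the dictionary $\Eg = (A_\gp)_1^{(2,1)}$, $\uomega = (\sigma_*\Omega^1)_1^{2,1}$ established above.

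The main obstacle I anticipate is bookkeeping rather than conceptual: one has to check that the normalizations implicit in the definition of $a^+$ (the choice of $P$, the Cartier self-duality of $\Eg$, the pullback $g_P^*(dx/x)$) and in the definition of $H$ (via Verschiebung on $\Lie_1^{2,1}$) are genuinely the same identification of the one-dimensional $\kappa$-space $\uomega_1^{2,1}$, up to no scalar at all — not merely up to a $(q-1)$-st root of unity, which would be invisible after raising to the power $q-1$ but which one still wants to rule out to get the clean statement. I would handle this by working out the single generic ordinary fibre completely: trivialize $\Eg^{(q)}$, write $V$ and $\Frel$ as explicit maps of $\mu_q$'s, and verify that $g_P^*(dx/x)^{\otimes(q-1)}$ and the Hasse datum agree on the nose. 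Once that one fibre is pinned down, the passage to a global identity of sections over $\Mig$ is formal from regularity and the density of the ordinary locus, and the behaviour over the supersingular points (where $\epsilon$ is totally ramified) needs no separate argument because both sides are already determined by their restriction to the ordinary locus inside the regular scheme $\Mig$.
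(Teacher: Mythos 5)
Your argument is exactly the one the paper relies on: the text gives no independent proof of Lemma \ref{root}, only the remark that it follows as in Proposition 5.2(2) of \cite{MR1074305}, and your fibrewise comparison of $(a^+)^{q-1}$ with $\epsilon^*H$ over the ordinary locus (via the connected--\'etale splitting of $\Eg$, Cartier duality, and the Verschiebung description of $H$), followed by extension across $\Sigma$ by density in the regular curve $\Mig$, is precisely Gross's argument transported to this setting. The normalization check you flag is the same bookkeeping carried out in the cited proposition, so there is no gap.
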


Let us denote by $(ss)$ the set of supersingular points of $\oMh$ 
and  $\Sigma = \epsilon^{-1}(ss)$.
Let $\wh{X} \subset \MH \backslash (ss)$ be a $\pi$-formal
 affine subscheme and let 
$\overline{X}$ be the reduction mod $\pi$ of $X$. Let 
$\wh{Z}:= \epsilon^{-1}(\wh{X}) \subset \MU$. 
Then $\wh{Z}$ has two connected components since the closed fiber
of $M'_{bal \cdot U(p),H'}$ over $p$ has so.
Let $\wh{X}_!$ denote the component whose reduction mod $p$ is contained inside the Igusa curve $M'_{Ig,H'} \backslash \Sigma$.

\section{Witt Vectors and Arithmetic Jet Spaces}
Witt vectors over Dedekind domains with finite residue fields were introduced 
in \cite{MR2833791}. We will give a brief over view in this section.

\subsection{Frobenius lifts and $\pi$-derivations}
Let $B$ be an $R$-algebra, and let $C$ be a $B$-algebra with structure map $u:B \map C$.
In this paper, a ring homomorphism $\psi:B\map C$ will be called a
{\it lift of Frobenius} (relative to $u$) if it satisfies the following:
\begin{enumerate}
\item The reduction mod $\pi$ of $\psi$ is the $q$-power Frobenius relative 
to $u$, that is,
$\psi(x) \equiv u(x)^q \bmod \pi C$.

\item The restriction of $\psi$ to $R$ coincides with the fixed $\phi$ on $R$,
that is, the following diagram commutes
        $$\xymatrix{
        B \ar[r]^\psi & C \\
        R \ar[r]_{\phi} \ar[u] & R. \ar[u] 
        }$$
\end{enumerate}

A \emph{$\pi$-derivation} $\d$ from $B$ to $C$ means a set-theoretic map
$\d:B \map C$ satisfying the following for all $x,y \in B$
        \begin{eqnarray*}
        \label{der}
        \d(x+y) &=& \d (x) + \d (y) + C_\pi(u(x),u(y)) \\
        \d(xy) &=& u(x)^p \d (y) +  \d (x) u(y)^p + \pi \d (x) \d (y),
        \end{eqnarray*}
where $C_\pi(X,Y)$ denotes the polynomial
        $$
        C_\pi(X,Y) = \frac{X^q + Y^q - (X+Y)^q}{\pi} \in R[X,Y],
        $$
such that for all $r\in R$, we have
$$
\d(r) = \frac{\phi(r)-r^q}{\pi}.
$$
When $C=B$ and $u$ is the identity map, we will call this simply a 
$\pi$-derivation on $B$.

It follows that the map $\phi: B \map C$ defined as
        $$
        \phi(x) := u(x)^p + \pi \d (x)
        $$
is a lift of Frobenius in the sense above. Conversely,
for any flat $R$-algebra $B$ with a lift of Frobenius $\phi$, one can define
the $\pi$-derivation $\d(x)= \frac{\phi(x)-x^q}{\pi}$ for all $x \in B$.

\subsection{Witt vectors}
We will define Witt vectors in terms of the Witt polynomials.
For each $n \geq 0$, let us define $B^{\phi^n}$ to be the $R$-algebra
with structure map $R \stk{\phi^n} {\map} R \stk{u}{\map} B$ and define the \emph{ghost rings}
to be the product $R$-algebras
$\Pi^n_{\phi} B = B \times B^{\phi} \times \cdots  \times B^{\phi^n}$ and
$\Pi_{\phi}^\infty B= B \times B^{\phi} \times \cdots$.
Then for all $n \geq 1$ there exists a \emph{restriction}, or \emph{truncation},
map $T_w:\Pi_{\phi}^nB \map \Pi_{\phi}^{n-1}B$ given by $T_w(w_0,\cdots,w_n)= (w_0,\cdots,w_{n-1})$.
We also have the left shift \emph{Frobenius} operators $F_w:\Pi_{\phi}^n B \map \Pi_{\phi}^{n-1} B$ given by
$F_w(w_0,\dots,w_n) = (w_1,\dots,w_n)$. Note that $T_w$ is an $R$-algebra morphism, but
$F_w$ lies over the Frobenius endomorphism $\phi$ of $R$.

Now as sets define
        \begin{equation}
        \label{eq-witt-coord}
        W_n(B)=B^{n+1}, 
        \end{equation}
and define the set map $w:W_n(B) \map \Pi_{\phi}^n B$  by $w(x_0,\dots,x_n)= (w_0,\dots,w_n)$ where
        \begin{equation}
        \label{eq-witt-poly}
        w_i = x_0^{q^i}+ \pi x_1^{q^{i-1}}+ \cdots + \pi^i x_i
        \end{equation}
are the \emph{Witt polynomials}.
The map $w$ is known as the {\it ghost} map. (Do note that under the traditional indexing our $W_n$ would be
denoted $W_{n+1}$.) We can then define the ring $W_n(B)$, the ring
of truncated $\pi$-typical Witt vectors, by the following theorem as for 
example in \cite{MR3316757}, page 141:
\begin{theorem}
\label{wittdef}
For each $n \geq 0$, there exists a unique functorial $R$-algebra structure on $W_n(B)$ such that
$w$ becomes a natural transformation of functors of $R$-algebras.
\end{theorem}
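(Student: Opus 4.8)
The plan is to carry out the classical construction of (truncated $\pi$-typical) Witt vectors in the present ramified relative setting, following the pattern of the cited reference; the one genuinely non-formal ingredient is a Dwork-type integrality lemma, which I single out as the main obstacle.

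I would first settle uniqueness and reduce existence to a universal flat ring. For a $\pi$-torsion-free $R$-algebra $B$ the ghost map $w$ is injective: from the identity $\pi^i x_i = w_i - (x_0^{q^i} + \pi x_1^{q^{i-1}} + \cdots + \pi^{i-1}x_{i-1}^{q})$ one solves for $x_0, x_1, \ldots$ recursively, dividing by $\pi^i$, which is legitimate over a $\pi$-torsion-free ring. Since $\Pi_\phi^n B$ is a genuine product $R$-algebra (with componentwise operations and structure map $r \mapsto (u(\phi^i(r)))_{0 \le i \le n}$), there can be at most one $R$-algebra structure on $W_n(B)$ making $w$ an $R$-algebra homomorphism, namely the one transported along the injection $w$. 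This gives uniqueness for $\pi$-torsion-free $B$; for an arbitrary $B$ one chooses a surjection $P \twoheadrightarrow B$ from a polynomial $R$-algebra $P$ (which is $\pi$-torsion-free and for which $W_n(P) \to W_n(B)$ is surjective on underlying sets, being $P^{n+1} \to B^{n+1}$), and functoriality then forces the structure on $W_n(B)$. The same reduction shows that for existence it is enough to produce the structure polynomials on a polynomial $R$-algebra.

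To produce those polynomials, take $\pi$ to be a uniformiser of $\Ou_\gp$, so that $\phi(\pi) = \pi$, and put $B_0 = R[x_0,\dots,x_n,y_0,\dots,y_n]$ with the lift of Frobenius $\psi$ raising each $x_i$ and $y_i$ to the $q$-th power and acting as $\phi$ on $R$. A direct computation gives the exact identity $w_i(\underline x) = \psi(w_{i-1}(\underline x)) + \pi^i x_i$, and similarly for $\underline y$; in particular $w_i(\underline x) \equiv \psi(w_{i-1}(\underline x)) \pmod{\pi^i}$. Since $\psi$ is a ring homomorphism, the tuples $w(\underline x) + w(\underline y)$, $w(\underline x)\cdot w(\underline y)$ and $-w(\underline x)$ all satisfy the congruences $a_i \equiv \psi(a_{i-1}) \pmod{\pi^i}$ for $1 \le i \le n$. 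The Dwork-type lemma below then places each of these tuples in the image of $w$, producing polynomials $S_i, P_i, I_i \in R[x_0,\dots,y_n]$ with $w(\underline S) = w(\underline x)+w(\underline y)$, $w(\underline P) = w(\underline x)\cdot w(\underline y)$, $w(\underline I) = -w(\underline x)$. Applying the same lemma to $R$ itself, equipped with its lift of Frobenius $\phi$ (where $\phi^i(r) \equiv \phi(\phi^{i-1}(r)) \pmod{\pi^i}$ holds trivially), realizes the tuple $(r, \phi r, \dots, \phi^n r)$ as $w$ of a Witt vector of $R$; composing with $u$ supplies the $R$-algebra structure map $R \to W_n(B)$. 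Together these polynomials define a functorial $R$-algebra structure on $W_n(-)$; every ring axiom holds because it may be checked on a polynomial $R$-algebra in enough variables, where $w$ is an injective homomorphism into a product ring, and $w$ is by construction a natural transformation of $R$-algebra-valued functors.

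It remains to formulate and prove the Dwork-type lemma, which is where the actual work lies: if $B$ is a $\pi$-torsion-free $R$-algebra carrying a lift of Frobenius $\psi$ over $\phi$, then a tuple $(a_0,\dots,a_n)$, with $a_i$ in the $i$-th ghost factor $B^{\phi^i}$, lies in the image of $w$ if and only if $a_i \equiv \psi(a_{i-1}) \pmod{\pi^i B}$ for $1 \le i \le n$. The forward direction is the computation indicated above, now only a congruence since $\psi$ need not be literally a coordinatewise $q$-th power. For the converse one defines $x_i$ by $\pi^i x_i = a_i - (x_0^{q^i} + \pi x_1^{q^{i-1}} + \cdots + \pi^{i-1}x_{i-1}^{q})$ and shows $x_i \in B$ by induction; the required divisibility by $\pi^i$ is extracted from two elementary facts — that $\psi(b) \equiv b^q \pmod{\pi B}$ (definition of a lift of Frobenius) and that $b \equiv c \pmod{\pi^k}$ implies $b^q \equiv c^q \pmod{\pi^{k+1}}$ in any $R$-algebra, the latter because $\pi \mid q$ in $R$ ($q$ is a power of $p$ and $p \in (\pi)$) — iterated along the chain $x_j^q, x_j^{q^2}, \dots$ to bump the modulus from $\pi$ up to $\pi^{i-j}$. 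Threading the Frobenius twists on the ghost factors $B^{\phi^i}$ correctly through this induction is the only delicate bookkeeping; I expect this integrality induction to be the principal obstacle, everything else being the formal descent from the universal $\pi$-torsion-free case.
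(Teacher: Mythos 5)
Your proof is correct. The paper itself gives no argument for this theorem---it is quoted from the reference \cite{MR3316757}---and what you have written is the standard construction of ramified ($\pi$-typical) Witt vectors: uniqueness via injectivity of the ghost map on $\pi$-torsion-free rings plus descent along a surjection from a polynomial ring, and existence via the Dwork-type lemma applied to the universal Frobenius lift $x_i\mapsto x_i^q$, $r \mapsto \phi(r)$. The two points you flag as the real content are indeed the right ones and both go through: choosing $\pi$ in $\Ou_\gp$ makes $\phi(\pi)=\pi$ so that $w_i=\psi(w_{i-1})+\pi^i x_i$ is an exact identity, and the integrality induction only needs $\pi\mid q$ (true since $q=p^f$ and $\gp$ lies over $p$) to promote $b\equiv c \pmod{\pi^k}$ to $b^q\equiv c^q\pmod{\pi^{k+1}}$.
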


\subsection{Operations on Witt vectors}
\label{subsec-witt-operations}
Now we recall some important operators on the Witt vectors.
They are the unique functorial operators corresponding under the ghost map
to the operators $T_w$, $V_w$, and $F_w$ on the ghost rings defined above.
First, the \emph{restriction}, or \emph{truncation}, maps $T:W_n(B) \map W_{n-1}(B)$
are given by $T(x_0,\dots,x_n) = 
(x_0,\dots, x_{n-1})$. 
There is also the {\it Frobenius} ring homomorphism
$F:W_n(B) \map W_{n-1}(B)$, which can be described in terms of the ghost map.
It is the unique map which is functorial in $B$ and makes the
following diagram commutative
        \begin{equation}
        \xymatrix{
        W_n(B) \ar[r]^w \ar[d]_F & \Pi^n_{\phi} B \ar[d]^{F_w} \\
        W_{n-1}(B) \ar[r]_-w & \Pi_{\phi}^{n-1} B^n
        } \label{F}
        \end{equation}
As with the ghost components, $T$ is an $R$-algebra map but $F$ lies over the Frobenius endomorphism $\phi$
of $R$.

Finally, we have the multiplicative Teichm\"uller map $\theta:B \map W_n(B)$ 
given by $x\mapsto [x]= (x,0,0,\dots)$.

\subsection{Prolongation sequences and jet spaces}
Let $X$ and $Y$ be $\pi$-formal schemes over $S=\Spf R$. We say a pair 
$(u,\d)$ is a {\it prolongation}, and write 
$Y \stk{(u,\d)}{\map} X$, if $u: Y \map X$ is a map of $\pi$-formal schemes 
over $S$ and $\d: \Ou_X \map u_*\Ou_Y$ is a 
$\pi$-derivation making the following diagram commute: 

$$
	\xymatrix{
	R \ar[r] &  u_* \Ou_Y \\
	R \ar[u]^\d \ar[r] &  \Ou_X \ar[u]_\d \\
	} 
	$$ 
Following \cite{MR1748272} (page 103), a {\it prolongation sequence} is a 
sequence of prolongations
	$$
	\xymatrix{
	S & T^0 \ar_-{(u,\d)}[l] & T^1 \ar_-{(u,\d)}[l] & \cdots\ar_-{(u,\d)}[l]},
	$$
where each $T^n$ is a $\pi$-formal scheme over $S$ satisfying 
$$u^* \circ \d = \d \circ u^*
$$
and $u^*$ is the pull-back morphism on the sheaves induced by $u$.
We will often use the 
notation $T^*$ or $\{T_n\}_{n \geq 0}$.
Note that if the  $T^n$ are flat over $S$ then having a 
$\pi$-derivation $\d$ is equivalent to having lifts of Frobenius $\phi:T^{n+1}
\to T^n$.

Prolongation sequences form a category $\mcal{C}_{S^*}$, where a morphism $f:T^*\to U^*$ is 
a family of morphisms $f^n:T^n\to U^n$ commuting with both the $u$ and $\d$, in the evident sense.
This category has a final object $S^*$ given by $S^n=\Spf R$ for all $n$, where each $u$ is the identity and
each $\d$ is the given $\pi$-derivation on $R$.

For any $\pi$-formal scheme $Y$ over $S$ and for all $n \geq 0$ we define the 
$n$-th jet space $J^nX$ (relative to $S$) as 
	$$
	J^nX (Y) := \Hom_S(W_n^*(Y),X)
	$$
where $W_n^*(Y)$ is defined as in 10.3 of \cite{MR2854117}. We will not define 
$W_n^*(Y)$ in full generality here. 
Instead, for simplicity of the exposition,
we will define $\Hom_S(W_n^*(Y),X)$ in the affine case.
Write $X = \Spf A$ and $Y=\Spf B$. Then $W_n^*(Y)= \Spf W_n(B)$ and $\Hom_S(W_{n}^*Y,X)$ is 
$\Hom_R(A,W_n(B))$, the set of $R$-algebra homomorphisms $A\to W_n(B)$.

Then $J^*X:= \{J^nX \}_{n \geq 0}$ forms a prolongation sequence and is 
called the {\it canonical prolongation sequence} as in 
\cite[Proposition 1,1]{MR1748272}. By the same Proposition 1.1 in 
\cite{MR1748272}, $J^*X$ satisfies the following 
universal property---for any $T^* \in \mcal{C}_{S^*}$ and $X$ a $\pi$-formal
scheme over 
$S^0$, we have
\begin{equation}	
\label{canprouniv}
	\Hom(T^0,X) = \Hom_{\mcal{C}_{S^*}}(T^*, J^*X).
\end{equation}	
Let $X$ be a $\pi$-formal scheme over
$S= \Spf R$. Define $X^{\phi^n}$ by $X^{\phi^n}(B) := X(B^{\phi^n})$ for any $R$-algebra $B$.
In other words, $X^{\phi^n}$ is $X \times_{S,\phi^n} S$, the pull-back of $X$ under the map $\phi^n:S\to S$.
(N.B., $(\Spf A)^{\phi^n}$ should not be confused with $\Spf (A^{\phi^n})$.)
Next define the following product of $\pi$-formal schemes
        $$
        \Pi^n_{\phi} X= X \times_S X^{\phi} \times_S \cdots \times_S X^{\phi^n}.
        $$
Then for any $R$-algebra $B$ we have $X(\Pi_{\phi}^n B) = X(B)\times_S \cdots \times_S X^{\phi^n}(B)$.
Thus the ghost map $w$ in Theorem \ref{wittdef} defines a map of $\pi$-formal
$S$-schemes
        $$
        w:J^nX \map \Pi_{\phi}^nX.
        $$
Note that $w$ is injective when evaluated on points with coordinates in any flat $R$-algebra.

The operators $F$ and $F_w$ in (\ref{F}) induce maps $\phi$ and $\phi_w$ fitting into a commutative diagram
\begin{equation}
        \label{phiw}
        \xymatrix{
        J^nX \ar[r]^w\ar[d]_{\phi} & \Pi_{\phi}^n X \ar[d]^{\phi_w}\\
        J^{n-1}X \ar[r]_w & \Pi_{\phi}^{n-1} X.
        }
\end{equation}
The map $\phi_w$ is easier to define. It is the left-shift operator given by
        $$
        \phi_w(w_0,\dots,w_n)= (\phi_S(w_1),\dots,\phi_S(w_n)),
        $$
where  $\phi_S:X^{\phi^i} \map X^{\phi^{i-1}}$ is the composition given in the following diagram:
\begin{equation}
        \label{ko}
        \xymatrix{
        X^{\phi^i}\ar[r]^-{\sim} & X^{\phi^{i-1}} \times_{S,\phi} S \ar[d] 
        \ar[r]^-{} & 
        X^{\phi^{i-1}} \ar[d] \\
        & S \ar[r]_{\phi} & S.
        }
\end{equation}
We note that a choice of a coordinate system on $X$ over $S$ induces coordinate systems on
$X^{\phi^i}$ for each $i$, and with respect to these coordinate systems,
$\phi_S$ is expressed as the identity. One might say that $\phi_S$
applies $\phi$ to the horizontal coordinates and does nothing to the vertical coordinates.

For the map $\phi:J^nX\to J^{n-1}X$, we can define it in terms of the functor of points.
For any $R$-algebra $B$, the ring map $F:W_n(B)\to W_{n-1}(B)$ is not $R$-linear
but lies over $\phi:R\to R$. As $B$ varies, the resulting linearized $R$-algebra maps
  $$
        W_n(B)\to W_{n-1}(B)^{\phi} = W_{n-1}(B^{\phi}),
        $$
induce functorial maps
\begin{equation}
        J^nX(B) = X(W_n(B)) \longmap X(W_{n-1}(B^{\phi})) = J^{n-1}X(B^{\phi}),
\end{equation}
which is the same as giving a morphism $\phi:J^n X\to J^{n-1}X$ lying over $\phi:S\to S$.

If $A$ is a $\pi$-formal group scheme over $S$,
the ghost map $w:J^nA\to \Pi_{\phi}^n A$ and the truncation map
$u:J^nA\to J^{n-1}A$
are $\pi$-formal group scheme homomorphisms over $S$.
On the other hand, the Frobenius maps $\phi:J^nA\to J^{n-1}A$ and $\phi_w:\Pi_{\phi}^nA \to \Pi_{\phi}^{n-1}A$
are $\pi$-formal group scheme homomorphisms lying over
the Frobenius endomorphism $\phi$ of $S$.

\subsection{Character groups of group schemes}
Given a prolongation sequence $T^*$ we can define its shift $T^{*+n}$ by 
$(T^{*+n})^j:= T^{n+j}$ for all $j$, page 106 in \cite{MR1748272}.
	$$
	S \stk{(u,\d)}{\leftarrow} T^n \stk{(u,\d)}{\leftarrow} T^{n+1}\dots 
	$$
We define a {\it $\d$-morphism of order $n$} from $X$ to $Y$ to be a 
morphism $J^{*+n}X \map J^*Y$ of prolongation sequences.
Let $A$ denote a $\pi$-formal group scheme over $S$.
We define a {\it character of order $n$}, $\Theta:A \map \hG$ 
to be a $\d$-morphism of order $n$ from $A$ to $\hG$
which is also a morphism of $\pi$-formal group schemes.
By the universal property of jet schemes as in (\ref{canprouniv}),
 an order $n$ character is equivalent to a homomorphism
$\Theta:J^nA \map \hG$ of $\pi$-formal group schemes over $S$. 
We denote the group of 
characters of order $n$ by $\bX_n(A)$. So we have
$$
	\bX_n(A)=\Hom(J^nA,\hG),
	$$
which one could take as an alternative definition. Note that $\bX_n(A)$ comes with an
$R$-module structure since $\hG$ is an $R$-module $\pi$-formal scheme over $S$. Also the inverse system 
$J^{n+1}A \stk{u}{\map} J^nA$ defines a directed system
$$
	\bX_n(A) \stk{u^*}{\map} \bX_{n+1}(A) \stk{u^*}{\map}\cdots
	$$
via pull back. Each morphism $u^*$ is injective and 
we then define $\bX_\infty(A)$ to be the direct limit $\varinjlim \bX_n(A)$
of $R$-modules.

\subsection{Differential Modular Forms}
\label{dmf}

Let $\wh{X}$ be an affine subscheme of $\MH$ such that the reduction mod $\pi$ 
denoted $\ov{X}$ is contained inside the ordinary locus. 
Let $L=\Spec (\bigoplus_{n \in \Z} \uomega^{\otimes n})$ 
be the physical line bundle attached to the line bundle $\underline{\omega}$ 
with the zero section removed over $X$.
The space of modular forms $M$ on $X$ are the global sections of 
$L$ on $X$~\cite{MR2027194}. Then the space of differential modular forms
of order $\leq n$ are the global sections of $J^nV$.

Recall from proposition (2.2) in \cite{MR2400054} that given any polynomial 
$w=w_0 + w_1 \phi + \cdots + w_n \phi^n$ in $\Z[\phi]$ of 
degree $n$, there exists a differential character $\chi_w: J^n\Gm \map \hG$
satisfying 
$$
\chi_w(\lambda) = \lambda^{w_0} \phi(\lambda)^{w_1} \cdots \phi^n(\lambda)^{w_n}
$$ 
for all invertible $\lambda$. 

Let $T^*=\{T^n\}$ be a prolongation sequence with $T^n = \Spf B_n$ where 
$B_n$ are $R$-algebras. Let $(A,i, \theta, \ov{\alpha}^\gp)$ be a tuple 
over $\Spf B_0$. Set $\uomega_{A/B_0} = 
\left(\sigma_* \Omega^1_{A/T^0}\right)$ where $\sigma$ is the identity
section of $A$ over $T^0$.
Then a differential modular form $f$ of order $\leq n$
and weight $w \in \Z[\phi]$ is a rule which assigns to any 
$(A,i, \theta, \ov{\alpha}^\gp,\omega, T^*)$ where $\omega$ is a 
basis for $\uomega_{A/T^0}$ an element 
$$f(A,i, \theta, \ov{\alpha}^\gp,\omega, T^*) \in B_n$$ 
such that 

$i)~ f(A,i, \theta, \ov{\alpha}^\gp,\omega, T^*)$ only depends on the 
isomorphism class of $(A,i, \theta, \ov{\alpha}^\gp,\omega, T^*)$

$ii)~$ the formation of $f(A,i, \theta, \ov{\alpha}^\gp,\omega, T^*)$ 
commutes with arbitrary base change

$iii)$ for any $\lambda \in B_0^\times$ we have $\chi_w(\lambda)$
$$
f(A,i, \theta, \ov{\alpha}^\gp,\lambda\omega, T^*) = \chi_w(\lambda)
f(A,i, \theta, \ov{\alpha}^\gp,\omega, T^*)
$$
Let $M^n(w)$ denote the subspace of differential
modular forms of order $\leq n$ and of weight $w \in \Z[\phi]$. 

Another way to describe $M^n(w)$ is as follows: consider the invertible sheaf
$$\uomega^{\otimes w} := \uomega^{w_0} \otimes (\phi^*\uomega)^{w_1} \otimes
\cdots \otimes ({\phi^n}^*\uomega)^{w_n}
$$
on $J^nX$. Then $M^n(w)$ is the space of global sections of $\uomega^{\otimes w}
$.
If $x$ globally generates $\underline{\omega}$, then 
every element of $M^n(w)$ is of the form $a x^w$ for some $a \in \Ou(J^n(X))$
and where $x^w= x^{w_0} \cdots (\phi^{n *}x)^{w_n}$. 

\section{Schemes attached to companion modular forms in characteristic zero}

\subsection{Preliminaries}

Let us recall the main Theorem of \cite{MR2357747}:

\begin{theorem}
\label{companion}
Let $\pip$ be a mod $p$ Hilbert modular form of parallel weight $2<k<p$ and
level $\mathfrak{n}$, $\mathfrak{n}$ coprime to $p$. Suppose $\pip$ is 
ordinary at all primes $\mfrak{p} \mid p$ and that the mod $p$ representation
$\orho_\pip:\Gal(\overline{F}/F) \map \mb{GL}_{2}({\ov{\F}}_p)$ is irreducible
and is tamely ramified at all primes $\mfrak{p} \mid p$. Then there is a 
companion form $\pip'$ of parallel weight $k'=p+1-k$ and level $\mfrak{n}$
satisfying $\ov{\rho}_{\pip'} \simeq \ov{\rho}_\pip \otimes \chi^{k'-1}$,
where $\chi$ is the $p$-adic cyclotomic character.
\end{theorem}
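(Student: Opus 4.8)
This is the main theorem of Gee~\cite{MR2357747}, so the plan is to reproduce his argument, which is the generalisation to a totally real field of Gross's construction of companion forms; I would organise it into a local analysis at the primes above $p$, a geometric construction on the Igusa curve, and an identification of the output as a Hilbert eigenform of the conjugate weight. \emph{Local analysis.} Ordinarity of $\pip$ at each $\mfrak p\mid p$ means $\orho_\pip|_{\Gal(\overline{F}_{\mfrak p}/F_{\mfrak p})}$ is an extension of an unramified character by one whose restriction to $I_{\mfrak p}$ is $\overline{\chi}^{\,k-1}$ (in the parallel-weight normalisation), $\overline{\chi}$ being the mod $p$ cyclotomic character; the tameness hypothesis forces this extension to split, so $\orho_\pip|_{\Gal(\overline{F}_{\mfrak p}/F_{\mfrak p})}\cong \chi_{1,\mfrak p}\oplus\chi_{2,\mfrak p}$ with $\chi_{1,\mfrak p}|_{I_{\mfrak p}}=\overline{\chi}^{\,k-1}$ and $\chi_{2,\mfrak p}$ unramified. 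Since $k+k'=p+1$, the twist $\orho_\pip\otimes\overline{\chi}^{\,k'-1}$ is again odd, irreducible and ordinary at every $\mfrak p$, now carrying the conjugate inertial weight $\overline{\chi}^{\,k'-1}$; this is the representation that must be realised by a form of parallel weight $k'$ and level $\mfrak n$.

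\emph{Geometric construction.} Transfer $\pip$ through Jacquet--Langlands and Carayol (Theorem~\ref{Carayol}) to a mod $p$ section of $\uomega^{\otimes k}$ on $\oMh$. Pull it back along $\epsilon:\Mig\to\oMh$ and, using Lemma~\ref{root} (so that $(a^+)^{q-1}$ is the Hasse invariant, which is invertible on the ordinary locus), combine this pullback with the unit-root splitting of the relative de Rham cohomology of the universal abelian scheme over $\overline{X}$ to produce a section of $\pomega^{\otimes k'}$ over the component $\Xp$ of the Igusa curve. A priori this section has a pole along the supersingular divisor $\Sigma=\epsilon^{-1}(ss)$, where $\epsilon$ is totally ramified by Lemma~\ref{Igusalem}. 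The crucial point, exactly as in Gross, is that the splitting of $\orho_\pip$ at each $\mfrak p$ obtained above is precisely the vanishing of the cohomological extension class that controls this pole at every $\mfrak p\mid p$, and hence forces the section to extend holomorphically across $\Sigma$.

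\emph{Identification.} The section so extended is, by the functoriality of the construction in $\pip$, a Hecke eigenform on $\Mu$ with eigenvalues $a_{\mfrak q}=\mathrm{N}(\mfrak q)^{k'-1}\,a_{\mfrak q}(\pip)$ for $\mfrak q\nmid\mfrak n p$; descending to $\oMu$ and running Jacquet--Langlands in reverse yields a mod $p$ Hilbert eigenform $\pip'$ of parallel weight $k'$ and level $\mfrak n$. Comparing the attached Galois representations through the Eichler--Shimura relation gives $\orho_{\pip'}\cong\orho_\pip\otimes\overline{\chi}^{\,k'-1}$, as required. Irreducibility of $\orho_\pip$ is used here to rule out any Eisenstein or oldform degeneracy, so that $\pip'$ genuinely has level prime to $p$.

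\emph{Main obstacle.} The hard step is the holomorphy of this divided section across the supersingular points, i.e.\ showing that tame ramification is \emph{sufficient}, not merely necessary, for the pole to disappear. This requires a delicate local computation at each point of $\Sigma$, comparing the Galois action on the fibre of $\uomega$ (equivalently, on the Dieudonn\'e module of the relevant $p$-divisible group) with the ramification of $\orho_\pip$ at $\mfrak p$; over a totally real field it has to be carried out at all $\mfrak p\mid p$ simultaneously, and one must keep track of the idempotent decomposition $\Lambda_1^2=\Lambda_1^{2,1}\oplus\Lambda_1^{2,2}$ and of the correct component $\Xp\subset\Mu$ so that the construction stays compatible with the quaternionic moduli interpretation. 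The inequalities $2<k<p$ are what keep both the power of $a^+$ used here and the symmetric construction with $k'=p+1-k$ inside the admissible range of weights. An alternative, more robust route available now is to directly produce an ordinary crystalline lift of $\orho_\pip\otimes\overline{\chi}^{\,k'-1}$ with parallel Hodge--Tate weights $\{0,k'-1\}$, unramified outside $\mfrak n p$, and invoke an $R=\T$ modularity lifting theorem over totally real fields; but that approach would still require separately verifying that the resulting form has level prime to $p$.
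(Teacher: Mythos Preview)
The paper does not prove this statement. It is introduced with the words ``Let us recall the main Theorem of \cite{MR2357747}'' and is invoked as a black box from Gee's work; no argument is supplied, so there is nothing in the paper itself to compare your proposal against.

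As a side remark on your sketch: the method you present as your main plan --- a direct Gross-style construction on the Igusa curve, dividing by a power of $a^+$ and proving holomorphy across the supersingular locus by identifying the pole order with the local extension class --- is not the route Gee takes in \cite{MR2357747}. Gee sets up the geometry (the curves $\Mh$, $\Mu$, $\Mig$, the section $a^+$, Lemmas~\ref{Igusalem} and~\ref{root}) in order to \emph{define} mod $p$ Hilbert modular forms on the quaternionic side, but the existence of the companion form is established via modularity lifting: one produces a characteristic-zero ordinary lift of $\orho_\pip\otimes\overline{\chi}^{\,k'-1}$ with the correct local behaviour and applies an $R=\T$ theorem over totally real fields, then reduces mod $p$. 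This is precisely what you flag at the end as the ``alternative, more robust route''. The Gross pole-cancellation argument has not been carried through unconditionally in the totally real setting, for essentially the obstacle you yourself identify (controlling the supersingular extension simultaneously at all $\mfrak p\mid p$).
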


Also recall lemma $3.2$ of \cite{MR2357747}:

\begin{lemma}
\label{levelup}
An ordinary mod $p$ Hilbert modular form of level $\mfrak{n}$ prime to $p$ is
an ordinary mod $p$ form of weight $2$ and level $\mfrak{n}p$.
\end{lemma}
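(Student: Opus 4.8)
The plan is to change the weight to $2$ by trivialising the Hodge bundle $\uomega$ along the Igusa curve with the canonical section $a^+$, at the expense of introducing $U_1(\gp)$-level structure at $\gp$; the crucial point will be that this operation leaves the $q$-expansion unaltered, so that ordinarity and the prime-to-$p$ Hecke data are transported intact.

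First I would view the given ordinary mod $p$ form $\pip$ of parallel weight $k$ and level $\n$ as a global section of $\uomega^{\otimes k}$ over the ordinary locus of $\oMh$. Pulling back along the map $\epsilon:\Mig \map \oMh$ of Lemma \ref{Igusalem}, which is finite flat of degree $q-1$ and \'{e}tale over the ordinary locus, produces a section $\epsilon^*\pip$ of $(\pomega)^{\otimes k}$ there. By Lemma \ref{root} the canonical section $a^+\in H^0(\Mig,\pomega)$ satisfies $(a^+)^{q-1}=H$ with $H$ the Hasse invariant; since $H$ is invertible exactly on the ordinary locus, $a^+$ is a unit section there, and I would form the weight $2$ section
\[
\pip\cdot (a^+)^{2-k}\ \in\ H^0\big(\Mig,\,(\pomega)^{\otimes 2}\big)
\]
over the ordinary part of the Igusa curve. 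Because $\Mig$ is one of the two irreducible components of the special fibre of $\Mu$ (equivalently $\Muo$ after Carayol's Theorem \ref{Carayol}), and $(\pomega)^{\otimes 2}=\epsilon^*\uomega^{\otimes 2}$, this section is precisely the restriction to that component of a weight $2$ mod $p$ form of level $\n p$.

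Next I would check that the $q$-expansion is preserved: the Hasse invariant $H$ has $q$-expansion $1$, so its canonical $(q-1)$-th root $a^+$ may be normalised to have $q$-expansion $1$ as well, whence $\pip\cdot(a^+)^{2-k}$ and $\pip$ have identical $q$-expansions. Consequently all Hecke eigenvalues $T_\ell$ for $\ell\nmid\n p$ are unchanged, and the new form is the ordinary $\gp$-stabilisation of $\pip$: its $U_\gp$-eigenvalue is the unit root of the Hecke polynomial at $\gp$, which is a $p$-adic unit precisely because $\pip$ was ordinary, so the resulting weight $2$ level $\n p$ form is again ordinary. The main obstacle is the passage from a section defined only over the ordinary (Igusa) locus to a genuine classical weight $2$ form on all of $\Muo$: one must control the behaviour of $\pip\cdot(a^+)^{2-k}$ near the supersingular points, where $a^+$ degenerates and Lemma \ref{Igusalem} shows $\epsilon$ is totally ramified, and identify the extension with the $\gp$-stabilised eigenform. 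This is exactly where Hida's ordinary idempotent enters, the ordinarity hypothesis guaranteeing both that the form extends across the supersingular locus and that the $U_\gp$-eigenvalue remains a unit.
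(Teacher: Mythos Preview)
The paper does not give its own proof of this lemma: it is simply recalled verbatim as Lemma~3.2 of Gee \cite{MR2357747}, with no argument supplied. So there is no in-paper proof to compare against.

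That said, your sketch is exactly the standard argument underlying Gee's lemma (and Gross's original in the elliptic case): pull back to the Igusa component via $\epsilon$, divide by $(a^+)^{k-2}$ to land in weight~$2$, and observe that the $q$-expansion is unchanged because $a^+$ has $q$-expansion $1$. Two small points are worth tightening. First, the sentence ``this section is precisely the restriction to that component of a weight $2$ mod $p$ form of level $\n p$'' is really the content of the lemma, not an intermediate observation; what you have at that stage is a section of $(\pomega)^{\otimes 2}$ on the ordinary locus of $\Mig$, and the identification of such sections with mod $p$ forms of level $\n p$ and weight $2$ (up to the nebentypus coming from the $(\Z/q\Z)^\times$-action on $a^+$) is precisely what the Igusa-curve description of $p$-adic modular forms gives. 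Second, for the purposes of this paper one does not actually need the extension across supersingular points or the invocation of Hida's idempotent: the construction of $\fsharp$ in Section~\ref{main-result} only uses the resulting weight $2$ eigenform over the ordinary locus $\wh X_!$, so a mod $p$ ``overconvergent'' statement suffices. Your acknowledgement of this as the main obstacle is accurate, but in the present application it is not a gap.
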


Hence starting with a $\pip$ which is a mod $p$ Hilbert modular form of 
parallel weight $k$, a companion form $\pip'$ constructed in theorem 
\ref{companion} is as a weight $2$ form of level $\mfrak{n}p$ 
by lemma \ref{levelup}. 
Consider the identification of the first \'{e}tale
cohomology tensored with $\bold{C}$ with the Hodge decomposition 
$$H^0(\Mu  \otimes \bold{C},\Omega^1_{\Mu}) \oplus
H^0(\Mu \otimes \bold{C},\ov{\Omega}^1_{\Mu}).
$$
Then by the discussion in \cite{MR2357747} page 17 (also definition $4.16$), 
the companion form $\pip'$ 
corresponds uniquely to a differential
$\omega_{\pip'} \in H^0(\Mu  \otimes \bold{C},\Omega^1_{\Mu} )$ where
$\omega_{\pip'}$ is a Hecke eigenform of weight $2$ (we are using the 
identification of the module of differentials 
$H^0(\Mu  \otimes \bold{C},\Omega^1_{\Mu})$
with weight $2$ modular forms).

Now we will describe the action of the diamond operator, that is the 
action of $(\Z/ q\Z)^\times$ on $\omega_{\pip'}$.
As in definition $4.16$ in \cite{MR2357747}, for $\alpha \in (\Z/q\Z)^\times$ 
we have $\langle \alpha \rangle \omega_{\pip'} = 
\theta(\alpha)^{-k'}
\omega_{\pip'}$ where $\theta: (\Z/q \Z)^\times \map 
R$ is the Teichm{\"u}ller character and $k'$ is as in theorem
\ref{companion}. Let $\Jac \stk{\nu}{\map} A_{\pip'}$ 
denote the quotient of abelian schemes (as in Theorem 4.4 in \cite{MR623136}) 
associated to the Hecke eigenform $\omega_{\pip'}$ that satisfies  
\begin{align}
\label{alpha}
\xymatrix{
\Jac \ar[d]_\nu \ar[r]^{\langle \alpha \rangle} & \Jac \ar[d]^\nu \\
A_{\pip'} \ar[r]^{\iota(\alpha)} & A_{\pip'}
}
\end{align}
where $\iota(\alpha) \in \End(A_{\pip'})$ is the induced endomorphism on 
$A_{\pip'}$.
Then the action of $\iota(\alpha)$ is given by
\begin{align}
\iota(\alpha) x = \theta(\alpha)^{-k'} x
\end{align}
for all $x \in A_{\pip'}$. By taking the $\pi$-formal completion of the schemes 
over $\Spec R$ in (\ref{alpha}), consider the following commutative diagram: 
\begin{align}
\label{nu}
\xymatrix{
\wh{X}_! \ar[d] \ar[r]^{\langle \alpha \rangle} &\wh{X}_! \ar[d] \\
\wh{\Jac} \ar[d]_\nu \ar[r]^{\langle \alpha \rangle} & \wh{\Jac} \ar[d]^\nu \\
\wh{A}_{\pip'} \ar[r]^{\iota(\alpha)} & \wh{A}_{\pip'}
}
\end{align}


\subsection{N\'{e}ron Models}
\label{neron}
For any scheme $X$ defined over $\Spec K$, let $\Ner{X}$ denote 
the N\'{e}ron model of $X$ over $\Spec R$. Then by the N\'{e}ron mapping 
property we have 
\begin{align}
\Ner{(\Mu)} \inj  \Ner{\Jac} \stk{\nu}{\longrightarrow} A_{\pip'}^{\ner}.
\end{align} 

Let ${(A_{\pip'}^{\ner})}^0$ denote the connected component to the identity of 
$A_{\pip'}^{\ner}$. Then ${(A_{\pip'}^{\ner})}^0$ satisfies the following short 
exact sequence of group schemes over $\Spec R$
\begin{align}
0 \longrightarrow T \stk{\iota}{\longrightarrow} {(A_{\pip'}^{\ner})}^{0} 
\stk{\sigma}{\longrightarrow} B \longrightarrow 0 
\end{align}
where $T$ is a torus and $B$ is an abelian scheme over $\Spec R$. Since we 
consider $R$ to be the maximal unramified extension of $\Ou_{\mfrak{p}}$, 
we can assume that $T$ is split over $R$ \cite{MR1083353}. 

By the N\'{e}ron mapping property, the Hecke and the diamond operators 
acting on $A_{\pip'}$ over $\Spec K$ induces endomorphisms on 
$A_{\pip'}^{\ner}$ over 
$\Spec R$. Then by the functoriality of the N\'{e}ron models for any 
$\gamma \in \End_R(A_{\pip'}^{\ner})$ induces an endomorphism 
(using the same notation) $\gamma: B \map B$ satisfying
$$\xymatrix{
{(A_{\pip'}^{\ner})}^0 \ar[d] \ar[r]^\gamma & {(A_{\pip'}^{\ner})}^0 \ar[d] \\
B \ar[r]^{\gamma} & B 
}$$

Then by composition we 
obtain the following map of the corresponding $\pi$-formal schemes
\begin{align}
\label{beta}
\beta: \ohat{X}_! \simeq  (\MU)^0 \longrightarrow (\ohat{\Ner{\Jac}})^0 
\stk{\nu}{\longrightarrow} (\ohat{A_{\pip'}^{\ner}})^0,
\end{align}
where $(\MU)^0$ denotes the connected component of the $\pi$-formal scheme
$\MU$ corresponding to $\ohat{X}_!$.
Then recall the following lemma obtained in Proposition 4.5 of \cite{MR2400054}.

\begin{lemma}
\label{theta}
(1) If $B \ne 0$ then there exists a non-zero $\Psi_2 \in 
\bX_2(J^2 (\ohat{B}))$
which is equivariant with respect to the Hecke and diamond operators.

In particular, we obtain $\Theta_2 : J^2(\ohat{A_{\pip'}^{\ner}})^0  \map \hG$ 
which is given by the following composition 
$$ 
J^2(\ohat{A_{\pip'}^{\ner}})^0 
\stk{J^2(\sigma)}{\longrightarrow} J^2(\ohat{B}) \stk{\Psi_2}{\longrightarrow}
 \hG.
$$

(2) If $B =0$ then we have 
 $A_{\pip'}^{\ner} = T \simeq \bb{G}_m^g$ for some $g$ since
$T$ is split. Therefore we obtain a non-zero $\Psi_1 \in \bX_1(J^1(\ohat{T}))$
which is equivariant with respect to the Hecke and diamond operators.

In particular, we obtain $\Theta_1: J^1(\ohat{A_{\pip'}^{\ner}})^0 
\map \hG$ which is given by the following 
$$
J^1(\ohat{A_{\pip'}^{\ner}})^0 = J^1(\ohat{T}) \stk{\Psi_1}{\longrightarrow} \hG.
$$

\end{lemma}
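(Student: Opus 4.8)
The plan is to reduce both parts to producing a non-zero element of the relevant module of $\d$-characters which is a simultaneous eigenvector for the Hecke operators and the diamond operators --- in the sense that precomposition with the $n$-th jet of each of these endomorphisms multiplies the character by a scalar --- and on which the diamond operator $\langle\alpha\rangle$ acts through $\theta(\alpha)^{-k'}$; once such a $\Psi_n$ is in hand, $\Theta_n$ is obtained by composing with $J^n(\sigma)$. The \emph{existence} of a non-zero $\d$-character is \cite{MR1358979}: $\bX_1(J^1(\ohat{T}))\ne 0$ for $\ohat{T}$ a $\pi$-formal split torus, and $\bX_2(J^2(\ohat{B}))\ne 0$ for $\ohat{B}$ a $\pi$-formal abelian scheme over $\Spf R$ of positive relative dimension; in both cases the module becomes finite-dimensional over $K$ after $\otimes_R K$. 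What must be added is the equivariance, which for the modular elliptic curve over $\Z_p$ is exactly \cite[Proposition 4.5]{MR2400054}, and our argument adapts that one.

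Consider part (2). Here $(A_{\pip'}^{\ner})^0=T\simeq\Gm^{g}$ is split, so one argues explicitly. Let $\psi^1\in\bX_1(\hGm)$ be a generator of this rank-one free $R$-module --- the $\pi$-adic analogue of Buium's character $\psi^1(x)=\frac{1}{\pi}\log(\phi(x)/x^q)$, \cite{MR1748272} --- so that $\bX_1(J^1(\ohat{T}))$ is free of rank $g$ over $R$, spanned by the $g$ copies of $\psi^1$ coming from the $g$ factors $\Gm$. By the N\'eron mapping property the Hecke and diamond operators on $A_{\pip'}$ induce endomorphisms of $T$, compatibly with $\sigma$, and these act on $\bX_1(J^1(\ohat{T}))$ by pull-back. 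After $\otimes_R K$ this is a non-zero module over $\End(T)\otimes\Q$, which receives the images of the Hecke algebra $\T$ and of the diamond operators; since $T$ is the toric part of the identity component of the N\'eron model of the $\pip'$-isotypic quotient $A_{\pip'}$ of $\Jac$, this module is isotypic for $\pip'$ --- up to replacing $\pip'$ by a Galois conjugate --- and hence contains a simultaneous eigenvector. Rescaling it back into $\bX_1(J^1(\ohat{T}))$ gives $\Psi_1$, without affecting non-vanishing or equivariance. Diagrams (\ref{alpha}) and (\ref{nu}) identify the action of $\langle\alpha\rangle$ on $A_{\pip'}$, and hence on $T$ and on $\Psi_1$, with $\iota(\alpha)=\theta(\alpha)^{-k'}$. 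As $B=0$ here, $\sigma$ is the identity and $\Theta_1=\Psi_1$.

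Part (1) runs in parallel, with $\ohat{T}$ replaced by the $\pi$-formal abelian scheme $\ohat{B}$ of relative dimension $g=\dim B>0$. By \cite{MR1358979}, $\bX_2(J^2(\ohat{B}))\otimes_R K$ is non-zero, and by functoriality of the $\d$-character construction it is a module over $\End(B)\otimes\Q$, on which the Hecke and diamond operators act through the endomorphisms of $B$ transported from $A_{\pip'}$ along $\sigma:(A_{\pip'}^{\ner})^0\to B$. As in part (2), $B$ inherits the $\pip'$-isotypic structure of $A_{\pip'}$ (again up to a Galois conjugate), so this module contains a simultaneous eigenvector for the Hecke and diamond operators with $\langle\alpha\rangle$ acting by $\theta(\alpha)^{-k'}$; rescaling into $\bX_2(J^2(\ohat{B}))$ gives $\Psi_2$. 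Finally $\Theta_2:=\Psi_2\circ J^2(\sigma):J^2(\ohat{A_{\pip'}^{\ner}})^0\to\hG$ is non-zero, because $\sigma$ is faithfully flat, so $J^2(\sigma)$ is dominant, while $\Psi_2\ne 0$.

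The step I expect to be the main obstacle is the equivariance, i.e.\ matching the action of $\End(B)\otimes\Q$ on the $\d$-character module with the Hecke eigensystem of $\pip'$. This rests on two points: the functoriality of the formation of $\d$-characters under isogenies and endomorphisms, so that $\bX_n(J^n(\ohat{B}))\otimes_R K$ is the expected $\End(B)\otimes\Q$-module --- dual to a cohomological realization --- and the faithfulness of that module over $\End(B)\otimes\Q$, equivalently that its $K$-dimension equals $\dim B$, so that no isotypic component is annihilated; both come from \cite{MR1358979}, and the faithfulness is where the bound ``order $\le 2$'' is used. The diamond eigenvalue $\langle\alpha\rangle\mapsto\theta(\alpha)^{-k'}$, and the appearance of a Galois conjugate of $k'$ (matching Theorem \ref{main-thm-new}), then follow formally from (\ref{alpha}), (\ref{nu}) and $\End(\hG)=R$. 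Given the $\pi$-adic Witt vector and jet-space formalism developed above, Buium's argument in \cite[Proposition 4.5]{MR2400054} transcribes with only cosmetic changes.
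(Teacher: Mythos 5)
Your proposal follows exactly the route the paper takes: the paper gives no argument of its own for this lemma, simply recalling it as Proposition 4.5 of \cite{MR2400054}, whose proof combines the existence of non-zero differential characters (order $1$ for a split torus, order $2$ for an abelian scheme, from \cite{MR1358979}) with the eigenvector argument in the $\End\otimes\Q$-module of characters to get Hecke and diamond equivariance — precisely the two ingredients you identify and adapt. Your reconstruction, including the isotypic/Galois-conjugate caveat and the non-vanishing of $\Theta_2=\Psi_2\circ J^2(\sigma)$ via surjectivity of $\sigma$, is consistent with how the lemma is used later in the paper (cf.\ the character $\chi(\iota(\alpha))=\theta(\alpha)^{c(k-2)}$ in the proof of Theorem \ref{main-thm-new}).
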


Given a non-zero differential character $\Theta_r \in \bX_r(A_{\pip'})$, 
applying the jet space functor to (\ref{nu}) and combining with lemma 
\ref{theta} we obtain the following:
\begin{align}
\label{Psichi}
\xymatrix{
J^r(\wh{X}_!) \ar[d] \ar[r]^{\langle \alpha \rangle} & J^r(\wh{X}_!) \ar[d] \\
J^r(\wh{\Jac}) \ar[d]_{J^r(\nu)} \ar[r]^{\langle \alpha \rangle} & 
J^r(\wh{\Jac}) \ar[d]^{J^r(\nu)} \\
J^r(\wh{A}_{\pip'}) \ar[d]_{\Theta_r} \ar[r]^{\iota(\alpha)} & 
J^r(\wh{A}_{\pip'})\ar[d]^{\Theta_r}
\\ 
\hG \ar[r]^{\chi(\iota(\alpha))} & \hG
}
\end{align}
where $\chi$ is the character from subring of $\End(A_{\pip'})$ generated by
the Hecke and diamond operators on 
$A_\pi$ to $R$ as constructed in \cite{MR2400054}, proposition 4.5.

\section{Main result}
\label{main-result}



We will first construct a presentation of the coordinate ring of the
$\pi$-formal scheme $\wh{X}_!$. This will be the analogous construction in 
\cite{MR2882615}. The construction will be done first modulo $\pi$ and then 
lift it in the formal scheme setting using Hensel's lemma.
But before we do that, we will prove the following basic lemma.

Let $X$ be a nonsingular curve over $k$.
Let $Y_1$ and $Y_2$ be  curves with $g_i: Y_i \rightarrow X$ for $i=1,2$ 
be smooth maps such that $\deg g_1 = \deg g_2$. Let $G$ be the 
Galois 
group acting on $Y_1$ such that $Y_1^G = X$. Let $f:Y_1 \rightarrow Y_2$ be a morphism
over $X$. Also further assume that $G$ acts transitively and freely 
on the fibers of $Y_2$ such that $f$ is $G$-equivariant. We have the 
following diagram
$$\xymatrix{
Y_1 \ar[dr]_{g_1} \ar[r]^f & Y_2 \ar[d]^{g_2} \\
& X.
}$$
The following lemma is standard:
\begin{lemma}
\label{gg}
Let $Y_1, Y_2, X$ and $f$ be as above. Then $f$ is an isomorphism.
\end{lemma}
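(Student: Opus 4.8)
The plan is to check that $f$ is both injective and surjective on points (over $\bar k$), together with the fact that it is a morphism of smooth curves of the same degree over $X$, and then conclude it is an isomorphism by a standard argument. First I would fix a geometric point $x\in X$ and look at the fibers $g_1^{-1}(x)$ and $g_2^{-1}(x)$. Since $Y_1^G=X$ and $g_1$ is smooth (hence, after possibly shrinking or passing to the étale-local picture, a $G$-torsor over $X$ away from ramification — but here $g_1$ is smooth so $G$ acts freely), the fiber $g_1^{-1}(x)$ is a $G$-torsor: $G$ acts simply transitively on it, so $\#g_1^{-1}(x)=\#G=\deg g_1$. By hypothesis $G$ also acts simply transitively on $g_2^{-1}(x)$, so $\#g_2^{-1}(x)=\#G=\deg g_2$, consistent with $\deg g_1=\deg g_2$.

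Next I would use $G$-equivariance of $f$ to see that the induced map $g_1^{-1}(x)\to g_2^{-1}(x)$ is a map of $G$-torsors (equivalently, a $G$-equivariant map between two sets on which $G$ acts simply transitively). Any such map is automatically a bijection: pick $y_0\in g_1^{-1}(x)$; every point of $g_1^{-1}(x)$ is $\sigma y_0$ for a unique $\sigma\in G$, and $f(\sigma y_0)=\sigma f(y_0)$, and since $\sigma\mapsto \sigma f(y_0)$ enumerates $g_2^{-1}(x)$ bijectively, $f$ restricted to the fiber is a bijection. Doing this for every geometric point $x$ shows $f$ is bijective on geometric points, hence quasi-finite, and in fact of degree $1$: $\deg f = \#f^{-1}(y)$ for general $y$, which we have just shown is $1$.

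Finally I would upgrade "bijective of degree one between smooth curves" to "isomorphism." Since $f\colon Y_1\to Y_2$ is a dominant morphism of smooth (in particular normal, integral) curves of degree $1$, it induces an inclusion of function fields $k(Y_2)\hookrightarrow k(Y_1)$ which is an isomorphism (degree one). Hence $f$ is birational; a birational morphism between smooth curves with $Y_2$ normal is an isomorphism (Zariski's main theorem, or directly: a birational proper — here finite, since $f$ is a morphism of curves finite over $X$ — morphism to a normal variety is an isomorphism). Alternatively, one can argue locally: $f$ is finite (it is quasi-finite and, being a morphism over $X$ with $g_1,g_2$ finite of the same degree, proper), flat (finite morphism of degree one onto a regular curve, or: finite morphism between regular one-dimensional schemes with all fibers of length one), so $f_*\mathcal O_{Y_1}$ is a locally free $\mathcal O_{Y_2}$-module of rank one containing $\mathcal O_{Y_2}$ as a direct summand via the structure map, forcing $f^\sharp\colon \mathcal O_{Y_2}\to f_*\mathcal O_{Y_1}$ to be an isomorphism; hence $f$ is an isomorphism.

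The main obstacle, and the only place requiring care, is justifying that $g_1^{-1}(x)$ is genuinely a $G$-torsor, i.e. that the smoothness of $g_1$ together with $Y_1^G=X$ forces $G$ to act freely with quotient exactly $X$ on the level of geometric fibers; once the two fibers are identified as $G$-sets on which $G$ acts simply transitively, everything else is formal. In the application $g_1$ is the Igusa covering $\epsilon$, which is finite étale of degree $q-1$ over the ordinary locus with Galois group $\Z/(q-1)\Z$ (Lemma~\ref{Igusalem}), so this hypothesis is exactly satisfied.
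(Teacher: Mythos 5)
Your proof is correct and follows essentially the same route as the paper's: the $G$-equivariance of $f$ together with the transitive action on the fibers of $g_1$ and the free (simply transitive) action on the fibers of $g_2$ forces $f$ to be a bijection fiberwise over $X$. The only difference is cosmetic, in the final upgrade from ``finite bijection of smooth curves'' to ``isomorphism'': the paper deduces unramifiedness from the equal-degree count and checks injectivity on tangent spaces, while you invoke the degree-one/birational-onto-normal argument --- both are standard and equivalent here.
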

\begin{proof}
Since $f$ is finite, it is sufficient to show that $f$ is a bijection at 
the level of $k$-points and that $f$ induces an injection on the tangent spaces.
Since both $Y_1$ and $Y_2$ are nonsingular curves over $k$ with 
$\deg (g_1) = \deg (g_2)$, it is sufficient to show that $f$ is bijective as
that would imply that the ramification index at each point has to be $1$.

Note that again by $\deg (g_1) = \deg (g_2)$, it is sufficient to show that
$f$ is injective. This we will show
over a fiber of any point $T \in X$. Let $P_0 \in Y_1$ such that 
$g_1(P_0)= T$.
Since $G$ acts transitively on the fibers of $Y_1$, any other $P$ lying over
$T$ will be given by $P= \sigma(P_0)$ for some $\sigma \in G$.

Now suppose $f$ is not injective over the fiber of $T$. Then there exist
distinct $P, P' \in Y_1$ such that $f(P)=f(P')$. Then if $P=\sigma (P_0)$ and
$P'=\sigma'(P_0)$, then we have $\sigma \sigma'^{-1} f(P_0) = f(P_0)$. But
since $G$ acts freely on the fibers of $Y_2$ as well, we must have 
$\sigma = \sigma'$ which is a contradiction and hence $f$ must be injective.
\end{proof}

If we still denote by $x$ the generator of $\epsilon^*\uomega$, then 
we have $a^+ = t x$ for some $t \in H^0(\Xp ,\Ou_{\Xp})$ and 
hence we obtain $t^{q-1} = \bar{\vp}$. 
Set $\overline{S}_{!!}= \barS[y]/(y^{q-1}- \obar{\vp})$
and $\Xpp:=\Spec \overline{S}_{!!}$. Define the $\overline{S}$-algebra
map $f^*:\overline{S}_{!!} \rightarrow \overline{S}_!$ given by $f^*(y)=t$.

Now we know that for any $\langle d \rangle \in G=\Z_p^{\times}$ the action
is given by
$\langle d \rangle a^+ = d^{-1} a^+$, which induces $\langle d\rangle t
= d^{-1} t$. We define an action of $G$ on $\Xpp $ by 
$\langle d \rangle y = d^{-1}y$. Then clearly $f^*$ is $G$-equivariant.
Let $f:\Xp \rightarrow \Xpp$ be the induced map of varieties.

\begin{prop}
\label{equal}
We have the following isomorphisms:
\begin{enumerate}
\item
 $\Xp \simeq \Xpp $.
 \item
In particular, we have
$\Ou(\Xfo) \simeq S[y]/(y^{q-1}-\vp)$ where $\vp \in S$ is some lift of 
$\obar{\vp}$.
 \end{enumerate}
\end{prop}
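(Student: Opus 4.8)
The plan is to deduce part (1) directly from Lemma~\ref{gg} and then lift to the $\pi$-formal/affine setting for part (2).

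\textbf{Proof sketch of (1).} First I would verify that the data $(\Xp, \Xpp, \barX, f)$ satisfies the hypotheses of Lemma~\ref{gg}. The role of $Y_1$ is played by $\Xp$, which is the reduction mod $\pi$ of $\Xfo$ and hence, by the construction at the end of Section~\ref{quaternionas}, is contained in the Igusa curve $M'_{Ig,H'}\backslash\Sigma$; by Lemma~\ref{Igusalem} the map $g_1 = \epsilon : \Xp \map \barX$ is finite \'etale of degree $q-1$ over the ordinary locus, and in particular smooth. The role of $Y_2$ is played by $\Xpp = \Spec\barS_{!!}$ with $\barS_{!!} = \barS[y]/(y^{q-1}-\obar\vp)$; since $\obar\vp = t^{q-1}$ is a unit on $\Xp$ (indeed $\obar\vp$ is the Hasse invariant, which is invertible on the ordinary locus by Lemma~\ref{root}), the polynomial $y^{q-1}-\obar\vp$ is separable over $\barS$ (here $p \nmid q-1$), so $g_2: \Xpp \map \barX$ is finite \'etale of degree $q-1$ as well, hence smooth, and $\deg g_1 = \deg g_2 = q-1$. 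The group $G = \Z/(q-1)\Z \cong \mu_{q-1}$ acts on $\Xp$ via the diamond operators $\langle d\rangle$, with $\Xp^G = \barX$ since $\epsilon$ is a $G$-torsor over the ordinary locus; and $G$ acts on $\Xpp$ by $\langle d\rangle y = d^{-1}y$, which is free and transitive on geometric fibers of $g_2$ because $\obar\vp$ is a unit there. The map $f^*(y) = t$ is an $\barS$-algebra map because $t^{q-1} = \obar\vp$ (this is exactly Lemma~\ref{root}), and it is $G$-equivariant because $\langle d\rangle t = d^{-1}t$ matches $\langle d\rangle y = d^{-1}y$. With all hypotheses in place, Lemma~\ref{gg} yields that $f$ is an isomorphism, i.e. $\Xp \simeq \Xpp$.

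\textbf{Proof sketch of (2).} Next I would lift this isomorphism to the $\pi$-formal level. Recall $\Xfo = \Spf S_!$ is the connected component of $\MU$ whose reduction mod $\pi$ is $\Xp = \Spec \barS_!$, and $\widehat X = \Spf S$ is the $\pi$-formal affine open of $\widehat{M'}{}_{0,H'}$ with $\barS = S/\pi S$. Choose any lift $\vp \in S$ of $\obar\vp$; since $\obar\vp$ is a unit in $\barS$ and $S$ is $\pi$-adically complete, $\vp$ is a unit in $S$. Set $S_{!!} := S[y]/(y^{q-1}-\vp)$, a finite $S$-algebra which is \'etale over $S$ (again using $p\nmid q-1$ and $\vp\in S^\times$) and whose reduction mod $\pi$ is $\barS_{!!}$. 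The element $t \in \barS_! = H^0(\Xp, \Ou_{\Xp})$ satisfying $t^{q-1} = \obar\vp$ lifts, via Hensel's lemma applied to the separable polynomial $Y^{q-1} - \vp$ over the $\pi$-adically complete ring $S_!$, to a unique $\tilde t \in S_!$ with $\tilde t^{q-1} = \vp$ reducing to $t$. This gives an $S$-algebra map $S_{!!} \map S_!$, $y \mapsto \tilde t$, which is an isomorphism modulo $\pi$ by part (1); since both sides are $\pi$-adically complete and $\pi$-torsion-free (being finite \'etale over $S$), a standard successive-approximation / Nakayama argument for $\pi$-adically complete rings upgrades this to an isomorphism $S_{!!} \isomap S_!$, i.e. $\Ou(\Xfo) \simeq S[y]/(y^{q-1}-\vp)$.

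\textbf{Main obstacle.} The genuinely delicate point is \emph{not} the lifting step, which is routine Hensel/completeness bookkeeping, but checking that $G$ acts freely and transitively on the fibers of $\Xpp$ (equivalently, that $\obar\vp$ is everywhere nonvanishing on $\barX$) and that the diamond-operator action on $t$ is precisely $\langle d\rangle t = d^{-1}t$ with the \emph{same} normalization as the $G$-action on $y$ — any sign or inversion discrepancy would break the $G$-equivariance of $f^*$ and hence the applicability of Lemma~\ref{gg}. I would therefore be careful to trace through the identity $a^+ = tx$ together with the known transformation $\langle d\rangle a^+ = d^{-1}a^+$ and the fact that $x$ generates $\epsilon^*\uomega$ with trivial $G$-action (being pulled back from $\barX$), so that indeed $\langle d\rangle t = d^{-1} t$, and to record that $\obar\vp = (a^+)^{q-1}/x^{q-1}$ is a unit on the ordinary locus because it equals the Hasse invariant by Lemma~\ref{root}.
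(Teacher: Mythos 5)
Your proposal is correct and follows essentially the same route as the paper: part (1) is obtained by verifying the hypotheses of Lemma~\ref{gg} (smoothness of $\Xpp$ over $\barX$, equal degrees, the $G$-equivariance coming from $\langle d\rangle a^+ = d^{-1}a^+$ and $a^+ = tx$) and applying it, and part (2) is the Hensel/$\pi$-adic-completeness lifting that the paper delegates to Lemma 3.2 of \cite{MR3349440}. Your explicit verification that $\obar{\vp}$ is a unit on the ordinary locus (via Lemma~\ref{root}) and that $y^{q-1}-\vp$ is separable since $p\nmid q-1$ fills in details the paper leaves implicit, but the argument is the same.
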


\begin{proof}
Note that $\Xpp$ is smooth over $\overline{X}$. Then $(1)$ 
follows from lemma \ref{gg} applied to $\Xp, \Xpp$ and $f$.
The second statement now follows from Lemma 3.2 of \cite{MR3349440}. 
\end{proof}

\begin{lemma}
\label{mainlemma}
If  we denote $S_n=\Ou(J^n\widehat{X})$, then we have
$$J^n(\wh{X}_!) = \Spf S_n[t]/(t^{q-1} - \varphi).$$
where $\varphi \in S$ is as in proposition \ref{equal}
\end{lemma}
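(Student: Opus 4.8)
The plan is to bootstrap the mod-$\pi$ description of $\Xp$ from Proposition \ref{equal} to a description of the jet spaces, using the étaleness of $\Xdag \to \wh X$ together with the functoriality of jet spaces. First I would recall from Proposition \ref{equal}(2) that $\Ou(\Xfo) \simeq S[y]/(y^{q-1}-\varphi)$ where $S = \Ou(\wh X)$ and $\varphi$ lifts $\obar\varphi$; write this finite flat $S$-algebra presentation as $\Ou(\wh X_!) = S[t]/(t^{q-1}-\varphi)$. The map $\wh X_! \to \wh X$ is finite, and over the ordinary locus it is étale of degree $q-1$ by Lemma \ref{Igusalem}; since $\varphi$ is a unit on $\wh X$ (being a power of $\obar\varphi$, the Hasse invariant, which is invertible on the ordinary locus and hence on $\wh X$) and $q-1$ is prime to $p$, the element $t^{q-1}-\varphi$ has invertible derivative $(q-1)t^{q-2}$ on $\Xp$, confirming the étaleness directly.

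The key step is then: since $J^n$ of an étale morphism is étale and is obtained by base change, we have a cartesian diagram
$$
\xymatrix{
J^n(\wh X_!) \ar[r] \ar[d] & \wh X_! \ar[d] \\
J^n \wh X \ar[r] & \wh X
}
$$
that is, $J^n(\wh X_!) \simeq J^n\wh X \times_{\wh X} \wh X_!$. This is the standard fact that for an étale morphism $Y \to X$ of $\pi$-formal schemes, the canonical square relating the $n$-th jet spaces to the zeroth jet spaces ($= $ the schemes themselves) is cartesian — it follows from the universal property \eqref{canprouniv} of the canonical prolongation sequence, because an étale morphism lifts uniquely along the nilpotent (indeed $\pi$-adically complete) thickenings $W_n^*(Y) \to Y = W_0^*(Y)$. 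Taking coordinate rings, $\Ou(J^n\wh X_!) = \Ou(J^n\wh X) \otimes_S \Ou(\wh X_!) = S_n \otimes_S S[t]/(t^{q-1}-\varphi) = S_n[t]/(t^{q-1}-\varphi)$, which is exactly the claim, with $\varphi$ viewed in $S_n$ via the structure map $S \to S_n$.

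The main obstacle — really the only substantive point — is justifying that the jet-space square above is cartesian for the étale morphism $\Xdag \to \wh X$ in the $\pi$-formal, $\pi$-typical-Witt-vector setting of this paper (rather than the $p$-typical setting where it is stated in \cite{MR1748272}, Prop.\ 1.1 and its corollaries). I would handle this by the functor-of-points argument: for a $\pi$-adically complete flat $R$-algebra $B$, the ring $W_n(B)$ is $\pi$-adically complete with $W_n(B)/\pi \cdot W_n(B)$ having the same underlying topological space as $B/\pi B$ (the Teichmann/ghost coordinates differ by nilpotents after reduction), so étale morphisms satisfy the infinitesimal lifting criterion against $W_n(B) \twoheadrightarrow B$, giving $J^n\wh X_!(B) = J^n\wh X(B) \times_{\wh X(B)} \wh X_!(B)$ naturally in $B$; representability then yields the cartesian square. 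Everything else — the finiteness, the explicit presentation, the identification of $\varphi$ — is then a formal consequence of base change along $S \to S_n$.
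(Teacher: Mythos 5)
Your proposal is correct and follows essentially the same route as the paper: the paper's proof likewise deduces $J^n(\wh{X}_!) \simeq J^n(\wh{X}) \times_{\wh{X}} \wh{X}_!$ from the \'etaleness of $\wh{X}_! \to \wh{X}$ (citing Proposition 1.6 of \cite{MR1748272}) and then applies Proposition \ref{equal}(2). Your extra verification of \'etaleness via the invertibility of $(q-1)t^{q-2}$ and the functor-of-points justification of the cartesian square are sound elaborations of what the paper leaves to the cited reference.
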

\begin{proof}

Since $\widehat{X}_! \rightarrow \widehat{X}$ is \'etale, by proposition 1.6
in \cite{MR1748272} we have the 
identification $J^n(\wh{X}_!) \simeq J^n(\wh{X}) \times_X \wh{X}_!$
and the result follows from Proposition \ref{equal} (2). 
\end{proof}

Consider the linear map 
$$\tau:\Ou(J^n(\wh{X}_!) ) \longrightarrow \bigoplus_{r=0}^{q-2}  M^n(-r)$$
given by $t \rightarrow x^{-1}$ where $x$ is the generator of the invertible 
sheaf $\uomega$. 

\begin{prop}
\label{mainprop}
For each $n$, the map $\tau$ induces an isomorphism of $S_n$-modules: 
$$\tau:\Ou(J^n(\wh{X}_!) ) \simeq \bigoplus_{r=0}^{q-2}  M^n(-r)$$
\end{prop}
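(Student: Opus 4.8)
The plan is to unwind both sides of $\tau$ using the explicit presentation of $\Ou(J^n(\wh X_!))$ from Lemma \ref{mainlemma} and the description of $M^n(w)$ as global sections of $\uomega^{\otimes w}$ over $J^n X$. By Lemma \ref{mainlemma} we have $\Ou(J^n(\wh X_!)) = S_n[t]/(t^{q-1}-\varphi)$, so as an $S_n$-module it is free with basis $1, t, t^2, \dots, t^{q-2}$. On the other side, since $x$ globally generates $\uomega$ over $\wh X$ (hence $x^{-r}$ globally generates $\uomega^{\otimes(-r)}$ on $J^n X$), every element of $M^n(-r)$ is uniquely of the form $a\, x^{-r}$ with $a \in S_n = \Ou(J^n X)$; thus $M^n(-r) \cong S_n$ as an $S_n$-module via $a x^{-r} \mapsto a$, and $\bigoplus_{r=0}^{q-2} M^n(-r)$ is free on the symbols $x^0, x^{-1}, \dots, x^{-(q-2)}$.

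The key point is that $\tau$ sends $t^r$ to $x^{-r}$ and is $S_n$-linear, so it matches the $S_n$-basis $\{t^r\}_{r=0}^{q-2}$ of the source bijectively with the $S_n$-basis $\{x^{-r}\}_{r=0}^{q-2}$ of the target; this immediately gives an $S_n$-module isomorphism. Concretely, first I would check that $\tau$ is well-defined: the relation $t^{q-1} = \varphi$ in the source must be respected, i.e. $\tau(t^{q-1}) = \tau(\varphi)$. Here $\tau(t^{q-1})$ should be interpreted via the grading — $t^{q-1}$ corresponds to weight $-(q-1)$, and $x^{-(q-1)} = x^{-(q-1)}$; on the target side $\varphi \in S_n$ lives in the weight-$0$ piece $M^n(0) = S_n \cdot x^0$. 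The compatibility is exactly the relation $(a^+)^{q-1} = H$ of Lemma \ref{root} together with $a^+ = t x$: indeed $a^+ = tx$ gives $t^{q-1} x^{q-1} = (a^+)^{q-1} = H$, and $H = \varphi$ (the Hasse invariant is the weight-$(q-1)$ form which trivializes $\uomega^{\otimes(q-1)}$ on the ordinary locus, so $\varphi = H x^{q-1}$ as sections, i.e. $t^{q-1} = \varphi$ as elements of $S_n$ after dividing by $x^{q-1}$). So $\tau$ is a well-defined $S_n$-algebra-type map sending a free basis to a free basis.

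Then I would conclude: an $S_n$-linear map between finite free $S_n$-modules of the same rank $q-1$ that carries one basis to another is an isomorphism. The only genuine subtlety — and the step I expect to be the main obstacle — is bookkeeping the grading: one must verify that $\tau$ really does land $t^r$ in the summand $M^n(-r)$ and not in some other weight, which amounts to tracking the $G = \Z/(q-1)\Z$-action (the diamond operators $\langle d\rangle$ act by $\langle d\rangle t = d^{-1} t$, matching $\langle d\rangle x^{-r} = d^{r}\cdot(\text{scaling})$ appropriately via condition (iii) in the definition of differential modular forms) so that the decomposition $\Ou(J^n(\wh X_!)) = \bigoplus_r S_n t^r$ is precisely the decomposition into $G$-isotypic pieces, and each isotypic piece of weight $-r$ is by definition $M^n(-r)$. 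This is the same mechanism invoked at the end of the ``Strategy of Proof'': $\Ou(J^n\Xdag)$ is a graded module over $\Ou(J^n X)$ with the grading respecting $\Gal(\Xdag/X) = \Z/(q-1)\Z$, and each graded piece is a space of differential modular forms of the appropriate weight. Once the grading is pinned down, the isomorphism is formal.
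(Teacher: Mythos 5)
Your proof is correct and follows essentially the same route as the paper: both identify $\Ou(J^n(\wh{X}_!))$ as the free $S_n$-module on $1,t,\dots,t^{q-2}$ via Lemma \ref{mainlemma} and observe that $\tau$ carries this basis to the basis $x^0,\dots,x^{-(q-2)}$ of $\bigoplus_r M^n(-r)$, whence bijectivity. Your additional checks (compatibility of $t^{q-1}=\varphi$ with $(a^+)^{q-1}=H$, and the matching of the $\Z/(q-1)\Z$-grading) are sound elaborations of points the paper leaves implicit.
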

\begin{proof}
For any element $\alpha = \sum_{r=0}^{q-2} \alpha_r t^r \in 
\Ou(J^n(\wh{X}_!) )$ we have  $\tau(\alpha)= \sum \alpha_r x^{-r}$.
Then clearly this map is injective and surjective as well.
\end{proof}

\subsection{Proof of theorem \ref{main-thm-new}}
We recall $\omega_{\pip'}$ to be the weight $2$ Hecke cuspform
associated as the companion form of the mod $p$ modular form $\pip$.
Recall the morphism 
$\beta$ as in cf. \S\ref{beta}
$$ \beta: \ohat{X}_! \longrightarrow (\ohat{A_{\pip'}^{\ner}})^0.$$


Then for all $r$, this induces the associated map of $\pi$-formal jet spaces
\begin{align}
\label{beta2}
J^r(\beta) : J^r\ohat{X}_! \map J^r(\ohat{A_{\pip'}^{\ner}})^0.
\end{align}
Now when $B=0$ set $r=1$, and $r=2$ otherwise. Then 
we define $\fsharp \in \Ou(J^r\ohat{X}_!)$ as 
\begin{align}
\label{fsharp}
\fsharp := \Theta_r \circ J^r(\beta).
\end{align}
where $\Theta_r$ is as in lemma \ref{theta}.
Now recall the identification as in proposition \ref{mainprop}
\begin{align}
\label{oiso}
\Ou(J^r(\ohat{X})) \simeq \bigoplus_{j=0}^{q-2} M^r(-j)
\end{align}
where the isomorphism respects the action of the group of diamond operators
that is isomorphic to $(\Z/q\Z)^\times$.

Now we claim that $\fsharp$ belongs to one of the graded pieces in 
(\ref{oiso}). And we will do that by showing that $\fsharp \in 
\Ou(J^r(\ohat{X}_!))$ is an eigenform with respect to the group of diamond 
operators $(\Z/q\Z)^\times$. 
By (\ref{Psichi}) we have 
\begin{align}
\fsharp \circ \langle \alpha \rangle = \chi(\iota(\langle\alpha \rangle)) 
\fsharp.
\end{align}
Since $(\Z/q\Z)^\times$ is a cyclic group, there exists an integer
 $c$ coprime to $(q-1)$ 
such that $\chi(\iota(\alpha)) = (\theta(\langle \alpha \rangle))^{c(k-2)}$. 
Hence we have 
\begin{align}
\fsharp \circ \langle \alpha \rangle = 
(\theta(\langle \alpha \rangle))^{k'} \fsharp
\end{align}
where $k' \equiv c(k-2) \mod (q-1)$. Hence $\fsharp \in M^r(-k')$ and we 
are done.

\bibliographystyle{plain}
\bibliography{Eisensteinquestion1}

\end{document}